\documentclass[11pt]{article}

\usepackage{times}

\usepackage{fullpage}
\usepackage{amsthm}

\usepackage{enumitem}
\usepackage[colorlinks=true, allcolors=blue]{hyperref}

\usepackage{amssymb}
\usepackage[compact]{titlesec}
\usepackage{amsmath}
\usepackage{nicefrac}
\usepackage{mathdots,amsthm,url}
\usepackage{ifpdf,color}
\usepackage{graphicx}
\usepackage{here}
\usepackage{subfigure}
\usepackage{accents}
\usepackage{bbm}

\usepackage[ruled,vlined,linesnumbered]{algorithm2e}

\newtheorem{theorem}{Theorem}[section]

\newtheorem{proposition}[theorem]{Proposition}
\newtheorem{lemma}[theorem]{Lemma}

\theoremstyle{definition}
\newtheorem{example}[theorem]{Example}
\newtheorem{definition}[theorem]{Definition}
\newtheorem{remark}[theorem]{Remark}

\newcommand{\Delete}[1]{}
\newcommand{\pend}{\hspace*{\fill} $\Box$}

\usepackage[ruled,vlined,linesnumbered]{algorithm2e}

\newcommand{\B}{\mathbf{B}}
\newcommand{\U}{\mathbf{U}}
\newcommand{\bL}{\mathbf{L}}

\newcommand{\cC}{\mathcal{C}}
\newcommand{\cX}{\mathcal{X}}
\newcommand{\F}{\mathbf{F}}

\newcommand{\ctr}{\Psi}

\newcommand{\R}{\mathbb{R}}

\newcommand{\EE}{\mathcal F}

\newcommand{\dotp}[2]{\left\langle #1,#2\right\rangle}

\newcommand{\rk}{\textrm{rk}}

\newcommand{\supp}{\textrm{supp}}
\newcommand{\pr}[2]{\langle #1, #2 \rangle}

\newcommand{\In}[1]{I_0(#1)}
\newcommand{\Iu}[1]{I_1(#1)}
\newcommand{\J}[1]{J(#1)}

\newcommand{\Js}[1]{J^\star(#1)}

\newcommand{\Jss}{J^\star}

\newcommand{\gap}{\eta}

\DeclareMathOperator*{\argmin}{arg\,min}

\newif\ifnotes\notesfalse

\ifnotes
\usepackage{color}
\newcommand{\notename}[2]{{\textcolor{red}{\footnotesize{\bf (#1:} {#2}{\bf ) }}}}

\newcommand{\fnote}[1]{{\notename{Fujishige}{#1}}}
\newcommand{\knote}[1]{{\notename{Kitahara}{#1}}}
\newcommand{\lnote}[1]{{\notename{Laci}{#1}}}

\else

\newcommand{\notename}[2]{{}}

\newcommand{\fnote}[1]{}
\newcommand{\lnote}[1]{}
\newcommand{\knote}[1]{}

\fi

\newcommand\blfootnote[1]{%
  \begingroup
  \renewcommand\thefootnote{}\footnote{#1}%
  \addtocounter{footnote}{-1}%
  \endgroup
}

\begin{document}

\title{An Update-and-Stabilize Framework \\for the 
Minimum-Norm-Point Problem\blfootnote{
An extended abstract of this paper has appeared in Proceedings of the 24rd Conference on Integer Programming and Combinatorial Optimization, IPCO 2023.
    SF's research is supported by JSPS KAKENHI Grant Numbers JP19K11839 
and 22K11922 and  by the Research Institute for Mathematical
Sciences, an International Joint Usage/Research Center located in Kyoto
University. TK is supported by JSPS KAKENHI Grant Number JP19K11830. LAV's research is supported by the European Research Council (ERC) under the European Union’s Horizon 2020 research and innovation programme (grant agreement no. 757481--ScaleOpt).
}}

\author{{Satoru Fujishige}\footnote{
Research Institute for Mathematical Sciences,
Kyoto University, Kyoto 606-8502, Japan. 
E-mail: fujishig@kurims.kyoto-u.ac.jp}, 
\ \ {Tomonari Kitahara}\footnote{
Faculty of Economics, Kyushu University, Fukuoka 819-0395, Japan. 
E-mail: tomonari.kitahara@econ.kyushu-u.ac.jp},
\ \ and \ \ {L\'{a}szl\'{o} A.~V\'{e}gh}\footnote{
Department of Mathematics, London School of Economics and 
Political Science, London, WC2A 2AE, UK. 
\ \ E-mail: L.Vegh@lse.ac.uk}
}

\date{}

\maketitle

\begin{abstract}
We consider the  minimum-norm-point (MNP) problem over polyhedra, a well-studied problem that encompasses linear programming. We present a  general algorithmic framework that combines  two fundamental approaches for this problem: active set methods and first order methods. Our algorithm performs first order update steps, followed by iterations that aim to `stabilize' the current iterate  with additional projections, i.e., find a locally optimal solution whilst keeping the current  tight inequalities. Such steps have been previously used in active set methods for the nonnegative least squares (NNLS) problem.

We bound on the number of iterations polynomially in the dimension and in the associated circuit imbalance measure. 
In particular, the algorithm is strongly polynomial for network flow instances. Classical NNLS algorithms such as the Lawson--Hanson algorithm are special instantiations of our framework; as a consequence, we obtain convergence bounds for these algorithms. Our preliminary computational experiments show promising practical performance.
\end{abstract}



\section{Introduction}\label{sec:Introduction}


We study the minimum-norm-point (MNP) problem 
\begin{equation}\label{eq:min-norm-1}\tag{P}
\text{Minimize }\tfrac{1}{2}||Ax-b||^2\text{ subject to }{\mathbf{0}}\le x\le u\, ,\, x\in\R^N\, , 
\end{equation}
where $m$ and $n$ are positive integers,
$M=\{1,\cdots,m\}$ and $N=\{1,\cdots,n\}$, 
 $A\in \R^{M\times N}$ is a matrix with rank $\rk(A)=m$,  $b\in\R^M$, and $u\in(\R\cup\{\infty\})^N$. We will use the notation $\B(u):=\{x\in\R^N\mid {\mathbf{0}}\le x\le u\}$ for the feasible set.
 The problem \eqref{eq:min-norm-1} generalizes the linear programming (LP) feasibility problem: the optimum value is 0 if and only if $Ax=b$, $x\in\B(u)$ is feasible. The case $u(i)=\infty$ for all $i\in N$ is also known as the \emph{nonnegative least squares (NNLS) problem}, a fundamental problem in numerical analysis.

Two extensively studied approaches for MNP and NNLS are \emph{active set methods} and \emph{first order methods}. An influential active set method was proposed by Lawson and Hanson \cite[Chapter 23]{lawson1995solving} in 1974. Variants of this algorithm were also proposed by Stoer \cite{stoer1971numerical}, Bj\"orck~\cite{bjorck1988direct}, Wilhelmsen~\cite{Wilhelmsen1976}, and Leichner, Dantzig, and Davis \cite{leichner1993strictly}.\footnote{While there are minor differences in the details, these are essentially the same algorithm. Henceforth, we refer only to the Lawson--Hanson algorithm for simplicity.} Closely related is Wolfe's  classical minimum-norm-point algorithm \cite{Wolfe76}. 
These are iterative methods that maintain a set of active variables fixed at the lower or upper bounds, and passive (inactive) variables. In the main update steps, these algorithms fix the active variables at the lower or upper bounds, and perform unconstrained optimization on the passive variables. Such update steps require solving systems of linear equations. In all these methods, the set of columns corresponding to passive variables is linearly independent. The combinatorial nature of these algorithms enables to show termination with an exact optimal solution in a finite number of iterations. However, obtaining subexponential convergence bounds for such active set algorithms has remained elusive; see Section~\ref{sec:further} for more work on NNLS and Wolfe's algorithm.

In the context of first order methods,
the formulation \eqref{eq:min-norm-1} belongs to a family of problems for which Necoara, Nesterov, and Glineur \cite{Necoara2019} showed linear convergence bounds. That is, the number of iterations needed 
to find an $\varepsilon$-approximate solution depends linearly on 
$\log(1/\varepsilon)$. Such convergence has been known for strongly convex functions, but this property does not hold for \eqref{eq:min-norm-1}.
However,  \cite{Necoara2019}  shows that restricted variants of strong convexity also suffice for linear convergence. For problems of the form \eqref{eq:min-norm-1}, the required property follows using Hoffman-proximity bounds \cite{Hoffman52}; see \cite{Pena2020} and the references therein for recent results on Hoffman-proximity.
In contrast to active set methods, first order methods are computationally  cheaper as they do not require solving systems of linear equations. On the other hand, they do not find exact solutions.

\medskip

We propose a new algorithmic framework for the minimum-norm-point problem \eqref{eq:min-norm-1} that can be seen as a blend of active set and first order methods. Our algorithm 
performs \emph{stabilizing steps} between first order updates, and terminates with an exact optimal solution in a finite number of iterations.
Moreover, we show poly$(n,\kappa)$ running time bounds for multiple instantiations of the framework, where $\kappa$ is the \emph{circuit imbalance measure} associated with the matrix $(A\mid I_M)$ 
(see Section~\ref{sec:circuits}). 
This gives strongly polynomial bounds whenever $\kappa$ is constant; in particular, $\kappa=1$ for network flow feasibility. We note that if $A\in\mathbb{Z}^{M\times N}$, then $\kappa\le \Delta(A)$ for the maximum subdeterminant $\Delta(A)$. Still,  $\kappa$ can be exponential in the encoding length of the matrix.

The stabilizing step is similar to the one used by Bj\"orck \cite{bjorck1988direct} who considered the same formulation \eqref{eq:min-norm-1}. The Lawson--Hanson algorithm for the NNLS problem can be seen as special instantiations of our framework, and we obtain an $O(n^{2}m^2\cdot \kappa^2\cdot \|A\|^2\cdot\log(n+\kappa))$ iteration bound. 
These algorithms only use coordinate updates as first order steps, and maintain
 linear independence of the columns corresponding to passive variables.
Our framework is signficantly more general: we waive the linear independence requirement and allow for arbitarty active and passive sets.  This provides much additional flexibility, as our framework can be implemented with a variety of first order methods. This feature also yields a significant advantage in our computational experiments.

\paragraph{Overview of the algorithm}
A key concept in our algorithm is the \emph{centroid mapping},  defined as follows.
  For disjoint subsets $I_0,I_1\subseteq N$, we let $\bL(I_0,I_1)$ denote the affine subspace of  $\R^N$ 
where  $x(i)=0$ for $i\in I_0$ and $x(i)=u(i)$ for $i\in I_1$.
For $x\in\B(u)$, let $\In{x}$ and $\Iu{x}$ denote the 
subsets of coordinates $i$ with $x(i)=0$  and $x(i)=u(i)$, respectively. 
The \emph{centroid mapping} $\ctr: \B(u)\to \R^N$ is a mapping with the property
that $\ctr{(x)}\in \arg\min_y\{\tfrac{1}{2}\|Ay-b\|^2\mid y\in \bL(\In{x},\Iu{x})\}$. This mapping may not be unique, since the columns of $A$ corresponding to $\J{x}:=\{i\in N\mid 0<x(i)<u(i)\}$ may not be independent: the optimal \emph{centroid set} is itself an affine subspace. The point $x\in\B(u)$ is \emph{stable} if $\ctr(x)=x$.
This generalizes an update  used by Bj\"orck \cite{bjorck1988direct}. However, in his setting $J(x)$ is always linearly independent and thus the centroid set is always a single point. Stable sets can also be seen as the analogues of \emph{corral} solutions in Wolfe's minimum-norm point algorithm.

Every major cycle starts with an update step and ends with  a stable point.  The update step could be any first-order step satisfying some natural requirements, such as variants of Frank--Wolfe, projected gradient, or coordinate updates. As long as the current iterate is not optimal, this update strictly improves the objective. Finite convergence follows by the fact that there can be at most $3^n$ stable points.

After the update step, we start a sequence of minor cycles. From the current iterate $x\in \B(u)$, we move to $\ctr(x)$ in case $\ctr(x)\in \B(u)$, or to the intersection of the boundary of $\B(u)$ and the line segment $[x,\ctr(x)]$ otherwise.
The minor cycles finish once $x=\ctr(x)$ is  a stable point.
The objective $\tfrac{1}{2}\|Ax-b\|^2$ is decreasing in every minor cycle, and at least one new coordinate $i\in N$ is set to 0 or to $u(i)$. Thus, the number of minor cycles in any major cycle is at most $n$. One can use various centroid mappings satisfying a mild requirement on $\ctr$, described in Section~\ref{sec:MNPZ}.


We present a poly$(n,\kappa)$ convergence analysis for the NNLS problem with coordinate updates, which corresponds to the Lawson--Hanson algorithm and its variants.  We expect that similar arguments extend to the capacitated case. The proof has two key ingredients. First, we show linear convergence of the first-order update steps (Theorem~\ref{thm:geom-progress}). Such a bound follows already from \cite{Necoara2019}; we present a simple self-contained proof exploiting properties of stable points and the uncapacitated setting. The second step of the analysis shows that in every poly$(n,\kappa)$ iterations, we can identify a new variable that will never become zero in subsequent iterations (Theorem~\ref{thm:conv-main}).
The proof relies on proximity arguments: we show that for any iterate $x$ and any subsequent iterate $x'$, the distance $\|x-x'\|$ can be upper bounded in terms of $n$, $\kappa$, and the optimality gap at $x$.

In Section~\ref{sec:experiments}, we present preliminary computational experiments using randomly generated problem instances of various sizes. We compare the performance of different variants of our algorithm to standard gradient methods. 
For the choice of update steps, projected gradient performs much better than coordinate updates used in the NNLS algorithms. We compare an `oblivious' centroid mapping and one that chooses $\ctr(x)$ as the nearest point to $x$ in the centroid set in the \emph{`local norm'} (see Section~\ref{sec:1-1}). 
The latter one appears to be significantly better. For choices of parameters $n\ge 2m$, the running time of our method with projected gradient updates and local norm mapping
is typically within a factor two of \texttt{TNT-NN}, the state-of-the-art practical active set heuristic for NNLS \cite{myre2017tnt}, despite the fact that we only use simple linear algebra tools and have not made any attempts for practical speed ups. The performance is often better than  projected
 accelerated gradient descent, the best first order approach.

\paragraph{Proximity arguments and strongly polynomial algorithms}
Arguments that show strongly polynomial convergence by gradually revealing the support of an optimal solution are prevalent in combinatorial optimization. 
These  date back to Tardos's \cite{Tardos85} groundbreaking work giving the first strongly polynomial algorithm for minimum-cost flows. Our proof is closer to the dual `abundant arc' arguments by Fujishige \cite{Fujishige86} and Orlin \cite{Orlin93}.
Tardos generalized the above result for general LP's, giving a running time dependence poly$(n,\log \Delta(A))$, where $\Delta(A)$ is the largest subdeterminant of the constraint matrix. In particular, her algorithm is strongly polynomial as long as the entries of the matrix are polynomially bounded integers. This framework was recently strengthened in \cite{DadushNV20} to poly$(n,\log \kappa(A))$ running time for the circuit imbalance measure $\kappa(A)$. They also highlight the role of Hoffman-proximity and give such a bound in terms of $\kappa(A)$. 
We note that the above algorithms---along with many other strongly polynomial algorithms in combinatorial optimization---modify the problem directly once new information is learned about the optimal support. In contrast, our algorithm does not require any such modifications, nor a knowledge or estimate on the condition number $\kappa$.  Arguments about the optimal support only appear in the analysis.

Strongly polynomial algorithms with  poly$(n,\log \kappa(A))$ running time bounds can also be obtained using layered least squares interior point methods. This line of work was initiated by Vavasis and Ye \cite{Vavasis1996} using a related condition measure $\bar\chi(A)$. An improved version that also established the relation between $\bar\chi(A)$ and $\kappa(A)$ was recently given by Dadush et al.~\cite{DHNV20}. 
We refer the reader to the survey \cite{ENV22} for properties and further applications of circuit imbalances.

\subsection{Further related work}\label{sec:further} 
The Lawson--Hanson algorithm remains popular for the NNLS problem, and several variants are known.  Bro and De Jong \cite{bro1997fast}, and by Myre et al.~\cite{myre2017tnt} proposed empirically faster variants. In particular,  \cite{myre2017tnt}
allows bigger changes in the active and passive sets, thus waiving the linear independence on passive variables, and reports a significant speedup. However, there are no theoretical results underpinning the performance of these heuristics.

Wolfe's  minimum-norm-point algorithm \cite{Wolfe76} 
considers the variant of \eqref{eq:min-norm-1} where the box constraint $x\in\B(u)$ is replaced by $\sum_{i\in N} x_i=1$,   $x\ge \bf0$. 
It has been successfully employed 
as a subroutine in various optimization problems, e.g., 
submodular function minimization 
\cite{FujiIsotani2011}, see  also
\cite{Bach2013,Fuji80,FHYZ2009}. Beyond the trivial $2^n$ bound, the  convergence analysis remained elusive; the first bound with $1/\varepsilon$-dependence was given by Chakrabarty et al.~\cite{Chakrabarty2014} in 2014. Lacoste-Julien and Jaggi \cite{Lacoste2015} gave a $\log(1/\varepsilon)$ bound, parametrized by the \emph{pyramidal width} of the polyhedron.
Recently, De Loera et al.~\cite{DHR2020} showed an example of exponential 
time behaviour of Wolfe's algorithm for the \emph{min-norm insertion rule} (the analogue of a pivot rule); no exponential example for other insertion rules such as the  \emph{linopt}  rule used  in the application for submodular minimization.\footnote{The \emph{linopt} rule corresponds to the coordinate updates in the terminology of this paper.}

Our Update-and-Stabilize algorithm is also closely related to the Gradient Projection Method, see \cite{conn1988testing} and \cite[Section 16.7]{nocedal1999numerical}. 
This method also maintains a non-independent set of passive variables. For each gradient update, a more careful search is used in the gradient direction, `bending' the movement direction whenever a constraint is hit. The analogues of stabilizer steps are conjugate gradient iterations. Thus, this method avoids the computationally expensive step of exact projections; on the other hand, finite termination is not guaranteed.
We further discuss the relationship between the two algorithms  in Section~\ref{sec:concl}.

\medskip

There are similarities between our algorithm and the Iteratively Reweighted Least Squares (IRLS) method that has been intensively studied since the 1960's \cite{Lawson1961,Osborne1985}. For some $p\in[0,\infty]$, $A\in\R^{M\times N}$ and $b\in\R^M$, the goal is to approximately solve $\min\{\|x\|_p\mid\, Ax=b\}$. At each iteration, a weighted minimum-norm point $\min\{\dotp{w^{(t)}}{x}\mid\, Ax=b\}$ is solved, where the weights $w^{(t)}$ are iteratively updated. The LP-feasibility problem $Ax=b$, $\mathbf{0}\le x\le \bf{1}$ for finite upper bounds $u=\mathbf{1}$ can be phrased as an $\ell_{\infty}$-minimization problem  $\min\{\|x\|_\infty\mid\, Ax=b-A\mathbf{1}/2\}$. Ene and Vladu \cite{EneVladu2019} gave an efficient variant of IRLS for $\ell_1$ and $\ell_\infty$-minimization; see their paper for further references.
Some variants of our algorithm solve a weighted least squares problem with changing weights in the stabilizing steps.  There are, however significant differences between IRLS and our method. The underlying optimization problems are different, and IRLS does not find an exact optimal solution in finite time. Applied to LP in the $\ell_\infty$ formulation, IRLS satisfies $Ax=b$ throughout while violating the box constraints $\mathbf{0}\le x\le u$. In contrast, iterates of our algorithm violate $Ax=b$ but maintain $\mathbf{0}\le x\le u$. The role of the least squares subroutines is also rather different in the two settings.



\section{Preliminaries}
\label{sec:prelim}

\paragraph{Notation}
 We use $N\oplus M$ 
for disjoint union (or direct sum) 
of the copies of the two sets. 
For a matrix $A\in\R^{M\times N}$,
 $i\in M$ and $j\in N$, we denote the $i$th row of $A$ by $A_i$ and 
$j$th column by $A^j$.
Also for any matrix $X$ denote by $X^\top$ the matrix transpose of $X$.  
We let $\|\cdot\|_p$ denote the $\ell_p$ vector norm; we use $\|\cdot\|$ to denote the Euclidean norm $\|\cdot\|_2$.
For a matrix  $A\in\R^{M\times N}$, we let $\|A\|$ denote the spectral norm, that is, the $\ell_2\to\ell_2$ operator norm.

For any $x, y\in \R^M$ we define 
$\dotp{x}{y}=\sum_{i\in M}x(i)y(i)$. We will use this notation also in other dimensions. We let $[x,y]:=\{\lambda x+(1-\lambda)y\mid \lambda\in[0,1]\}$ denote the line segment between the vectors $x$ and $y$.

\subsection{Elementary vectors and circuits}\label{sec:circuits}

For a linear space $W\subsetneq \R^N$, $g\in W$  is an 
 \emph{elementary vector} if $g$ is a support minimal nonzero vector in $W$, that is, no $h\in W\setminus\{\bf0\}$ exists such that $\supp(h)\subsetneq \supp(g)$, where $\supp$ denotes the support of a vector. We let $\EE(W)\subseteq W$ 
 denote the set of elementary vectors.
 A \emph{circuit} in $W$ is the support of some elementary vector; these are precisely the circuits in the associated  linear matroid $\mathcal{M}(W)$.

The subspaces  $W=\{\bf0\}$ and $W=\R^N$ are called trivial subspaces; 
all other subspaces are nontrivial.
 We define the \emph{circuit imbalance measure}
 \[
\kappa(W):=\max\left\{\left|\frac{g(j)}{g(i)}\right|\mid g\in\EE(W), i,j\in \supp(g)\right\}\, 
\]
for nontrivial subspaces and $\kappa(W)=1$ for trivial subspaces.
For a matrix $A\in\R^{M\times N}$,  we  use the notation $\kappa(A)$ to denote  
$\kappa(\ker(A))$.

The following theorem shows the relation to totally unimodular (TU) matrices.
Recall that a matrix is \emph{totally unimodular (TU)} if the determinant of every square submatrix is $0$, $+1$, or $-1$.
\begin{theorem}[Cederbaum, 1957, see {\cite[Theorem 3.4]{ENV22}}]
  \label{thm:tu_iff_kappa_1}
  Let $W \subset \R^N$ be a linear subspace. Then $\kappa(W) = 1$ if and only if there exists a TU matrix $A\in\R^{M\times N}$ such that $W = \ker(A)$.
\end{theorem}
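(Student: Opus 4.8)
The plan is to prove both implications through the classical bridge, supplied by Cramer's rule, between the entries of an elementary vector and the maximal subdeterminants of a matrix representing the associated matroid. Throughout I write $\mathcal{M}=\mathcal{M}(W)$ for the matroid on ground set $N$ whose circuits are the supports of elementary vectors of $W$, set $\rho=\rk(\mathcal{M})=|N|-\dim W$, and write $A[S,T]$ for the submatrix of $A$ with rows $S$ and columns $T$. The trivial subspaces satisfy $\kappa=1$ by definition and are realized by $I_N$ (for $W=\{\mathbf{0}\}$) or the empty $0\times N$ matrix (for $W=\R^N$), so I assume $W$ nontrivial. For the easy direction ($\Leftarrow$), suppose $W=\ker(A)$ with $A$ totally unimodular, take $g\in\EE(W)$ with circuit $C=\supp(g)$, and fix $i,j\in C$. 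Since $C$ is a circuit, the solutions $y$ of $\sum_{k\in C}y(k)A^k=\mathbf{0}$ form a one-dimensional space spanned by $g|_C$; choosing a maximal independent row set $S$ of $A[\cdot,C]$ and applying Cramer's rule gives $g(k)=\pm\det A[S,C\setminus\{k\}]$ for all $k\in C$ up to one global scalar. Each such $(|C|-1)\times(|C|-1)$ minor lies in $\{0,\pm1\}$ as $A$ is TU, and is nonzero exactly when $k\in\supp(g)$; hence $|g(i)/g(j)|=1$, and since this holds for every elementary vector and every pair in its support, $\kappa(W)=1$.

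For the hard direction ($\Rightarrow$) I would first build a candidate representation. Fix a basis $B$ of $\mathcal{M}$. For each $j\in N\setminus B$ let $g_j\in\EE(W)$ be supported on the fundamental circuit $C_j\subseteq B\cup\{j\}$, normalized so $g_j(j)=1$. Because $i,j\in\supp(g_j)$ and $\kappa(W)=1$ forces $|g_j(i)/g_j(j)|=1$, each entry $g_j(i)$ with $i\in B$ lies in $\{0,\pm1\}$. I then define $A$ with rows indexed by $B$ by $A^i=e_i$ for $i\in B$ and $A^j=-\sum_{i\in B}g_j(i)e_i$ for $j\in N\setminus B$, so that $A=[\,I_B\mid D\,]$ with $D$ a $\{0,\pm1\}$-matrix. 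A one-line check gives $Ag_j=\mathbf{0}$, so each $g_j\in\ker(A)\cap W$; as the $g_j$ are linearly independent and number $|N|-\rho=\dim W=\dim\ker(A)$, it follows that $\ker(A)=W$.

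It remains to show $A=[\,I_B\mid D\,]$ is TU, and here is the crux of the argument. Expanding any minor along the identity columns identifies the minors of $A$ with the minors of $D$, so it suffices to show every maximal ($\rho\times\rho$) minor $\det A[\cdot,B']$ lies in $\{0,\pm1\}$; note $\det A[\cdot,B']\neq 0$ exactly when $B'$ is a basis of $\mathcal{M}$. I would argue along the basis-exchange graph: for adjacent bases $B'$ and $B''=B'\setminus\{p\}\cup\{q\}$, Cramer's rule yields $\det A[\cdot,B'']=\pm\,h(p)\,\det A[\cdot,B']$, where $h\in\EE(W)$ is the fundamental-circuit elementary vector of $q$ with respect to $B'$, normalized at $q$; since $p,q\in\supp(h)$, the hypothesis $\kappa(W)=1$ gives $|h(p)|=1$, so $|\det A[\cdot,B'']|=|\det A[\cdot,B']|$. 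The basis-exchange property makes this graph connected and $\det A[\cdot,B]=1$, so propagating the equal-absolute-value relation shows $|\det A[\cdot,B']|=1$ for every basis and $0$ otherwise; thus all maximal minors are in $\{0,\pm1\}$ and $A$ is TU.

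The main obstacle is precisely this last propagation step, which is where the \emph{full} strength of $\kappa(W)=1$ is used rather than only a bound on the fundamental-circuit entries: a $\{0,\pm1\}$-matrix $[\,I\mid D\,]$ need not be TU, and larger subdeterminants must be controlled. The reduction above expresses every maximal minor as a telescoping product of single-step exchange ratios, each equal in absolute value to an entry ratio $|h(p)/h(q)|$ of a \emph{single} elementary vector, which $\kappa(W)=1$ pins to $1$. The points needing care are the Cramer's-rule bookkeeping (only absolute values matter, so signs are irrelevant for TU-ness), the connectivity of the basis-exchange graph, and the verification that the one-step ratio really is an entry ratio within one elementary vector so that the definition of $\kappa(W)$ applies.
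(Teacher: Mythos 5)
Your proof is correct, but note that the paper does not prove this statement at all: it is quoted as a classical result of Cederbaum with a pointer to the survey \cite{ENV22}, so there is no in-paper argument to compare against. What you have reconstructed is essentially the standard proof of the equivalence between unimodular representability and unit circuit imbalances. The backward direction via Cramer's rule (circuit entries are, up to a common scalar, signed maximal cofactors of the column submatrix, hence $\pm 1$ for a TU matrix) is exactly right, including the observation that all cofactors indexed by a circuit are nonzero because the kernel vector has full support on the circuit. The forward direction is also sound: the standard-form matrix $[\,I_B\mid D\,]$ built from fundamental-circuit elementary vectors does satisfy $\ker(A)=W$ by the dimension count you give, the reduction of all minors of $[\,I_B\mid D\,]$ to maximal minors (equivalently, minors of $D$) is the usual Laplace-expansion argument, and the propagation of $|\det A[\cdot,B']|=1$ along the connected basis-exchange graph correctly isolates each one-step determinant ratio as a single entry $|h(p)|$ of one elementary vector normalized at $h(q)$, which is where $\kappa(W)=1$ enters with full force. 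You correctly flag the one place a weaker hypothesis would not suffice (a $\{0,\pm1\}$ matrix $[\,I\mid D\,]$ need not be TU). The only cosmetic caveat is that for the trivial case $W=\R^N$ the theorem's ambient format expects some matrix $A\in\R^{M\times N}$; a single all-zero row (or the convention of an empty matrix, as you use) handles it.
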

We also note that if $A\in\mathbb{Z}^{M\times N}$ is an integer matrix, then $\kappa(A)\le \Delta(A)$ for the maximum subdeterminant $\Delta(A)$.
\paragraph{Conformal circuit decompositions}
\label{par:sign_consistent}
We say that the vector $y \in \R^N$ \emph{conforms to}
$x\in\R^N$ if $x(i)y(i) > 0$ whenever $y(i)\neq 0$. 
Given a subspace $W\subseteq \R^N$, a \emph{conformal circuit decomposition} of a vector $v\in W$ is a decomposition
\[
v=\sum_{k=1}^\ell  h^k,
\]
where
$\ell\le n$ and  $h^1,h^2,\ldots,h^\ell\in \EE(W)$ are elementary vectors that conform to
$v$. A fundamental result on elementary vectors asserts the existence of a conformal circuit decomposition; see e.g., \cite{Fulkerson1968,RockafellarTheEV}. Note that there may be multiple conformal circuit decompositions of a vector.

\begin{lemma} \label{lem:conformal}
For every subspace $W\subseteq \R^N$,  every $v\in W$ admits a conformal circuit decomposition. 
\end{lemma}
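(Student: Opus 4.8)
The plan is to prove this classical fact by induction on the support size $|\supp(v)|$. The base case $v=\mathbf{0}$ is handled by the empty decomposition. For the inductive step, the heart of the argument is to extract a single elementary vector conforming to $v$ whose support lies inside $\supp(v)$, subtract a suitable positive multiple of it to strictly shrink the support, and then invoke the inductive hypothesis.

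First I would establish the key claim: \emph{every nonzero $v\in W$ contains a conforming elementary vector}, i.e.\ there is $g\in\EE(W)$ conforming to $v$ with $\supp(g)\subseteq\supp(v)$. To find it, consider the family of nonzero vectors in $W$ conforming to $v$; this family is nonempty since $v$ conforms to itself, and conformality forces $\supp(g)\subseteq\supp(v)$ for any member $g$. Let $g$ be a member whose support is inclusion-minimal within this family. It then remains to show $g$ is in fact elementary in $W$, not merely minimal among conforming vectors.

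This elementarity argument is the main obstacle and the only genuinely delicate point. Suppose $g$ were not elementary; then there is a nonzero $h\in W$ with $\supp(h)\subsetneq\supp(g)$. I would consider the one-parameter family $g-\theta h$ and choose $\theta$ so as to cancel at least one coordinate of $g$ while keeping every surviving coordinate sign-consistent with $v$. Concretely, for $i\in\supp(h)$ set $r_i=g(i)/h(i)$, so that $(g-\theta h)(i)=h(i)(r_i-\theta)$; at $\theta=0$ this equals $g(i)$, which has the sign of $v(i)$. Letting $\theta^\ast$ be the $r_i$ of smallest absolute value and moving $\theta$ from $0$ to $\theta^\ast$, no coordinate reaches its own $r_j$ before $\theta^\ast$, so every coordinate of $g-\theta^\ast h$ is either zero or keeps the sign of $g(i)$ (hence of $v(i)$), while the coordinate attaining $\theta^\ast$ is driven to zero. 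Since $\supp(h)\subsetneq\supp(g)$, the vector $g-\theta^\ast h$ is nonzero, conforms to $v$, and has support strictly smaller than that of $g$, contradicting minimality. Thus $g$ is elementary.

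With the claim in hand, I would complete the induction as follows. Given the conforming elementary vector $g$, set $\alpha^\ast=\min\{v(i)/g(i)\mid i\in\supp(g)\}>0$; the positivity of each ratio follows from $g$ conforming to $v$, and at $\alpha=\alpha^\ast$ at least one coordinate of $v-\alpha^\ast g$ vanishes while the remaining coordinates retain the sign of $v$. Hence $h^1:=\alpha^\ast g$ is elementary (scaling preserves support-minimality), conforms to $v$, and $v':=v-h^1$ lies in $W$, conforms to $v$, and satisfies $\supp(v')\subsetneq\supp(v)$. Applying the inductive hypothesis to $v'$ yields a conformal decomposition $v'=\sum_{k\ge 2}h^k$; since each $h^k$ conforms to $v'$ with $\supp(h^k)\subseteq\supp(v')\subseteq\supp(v)$ and $v'$ conforms to $v$, the signs chain and each $h^k$ conforms to $v$ as well. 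Prepending $h^1$ gives the desired decomposition, and since each extraction removes at least one index from the support, the number of terms is at most $|\supp(v)|\le n$.
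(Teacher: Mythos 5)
Your proof is correct, but note that the paper itself does not prove this lemma at all: it invokes it as a classical fact with citations to Fulkerson and Rockafellar, so there is no in-paper argument to compare against. What you have written is a complete, self-contained version of the standard argument, and the two genuinely delicate points are both handled properly. First, the exchange step showing that an inclusion-minimal nonzero conforming vector $g$ is actually elementary in $W$ (not merely minimal among conforming vectors) is the crux, and your choice of $\theta^\ast$ as the ratio $r_i=g(i)/h(i)$ of smallest absolute value does exactly what is needed: for any other index $j$, either $r_j$ has the opposite sign to $\theta^\ast$ or $|r_j|\ge|\theta^\ast|$, and in both cases $r_j-\theta^\ast$ retains the sign of $r_j$, so $(g-\theta^\ast h)(j)=h(j)(r_j-\theta^\ast)$ keeps the sign of $g(j)$; meanwhile some coordinate vanishes and a coordinate in $\supp(g)\setminus\supp(h)$ survives, giving the contradiction with minimality. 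Second, the chaining of conformality (each $h^k$ conforms to $v'$, and $v'$ conforms to $v$, hence each $h^k$ conforms to $v$) is exactly the observation needed to make the induction close, and the step-counting gives $\ell\le|\supp(v)|\le n$, matching the bound $\ell\le n$ required by the paper's definition. What your approach buys, relative to the paper's bare citation, is a fully elementary and verifiable proof; the cost is length, which is presumably why the authors deferred to the literature.
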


Given $A\in\R^{M\times N}$, we define the extended subspace $\cX_A\subset \R^{N\oplus M}$ as $\cX_A:=\ker(A\mid -I_M)$. Hence, for every  $v\in \R^N$, $(v,Av)\in\cX_A$. For $v\in \R^N$, the \emph{generalized path-circuit decomposition of $v$ with respect to $A$} is a decomposition $v=\sum_{k=1}^\ell  h^k$,
where $\ell\le n$, and for each $1\le k\le \ell$, 
$(h^k,Ah^k)\in\R^{N\oplus M}$ 
is an elementary vector in $\cX_A$ that conforms to $(v,Av)$. Moreover, $h^k$ is an \emph{inner vector} in the decomposition if $Ah^k=\bf0$ 
and an \emph{outer vector} otherwise. 

We say that $v\in\R^N$ is  \emph{cycle-free with respect to $A$}, if all generalized path-circuit decompositions of $v$ contain outer vectors only. The following lemma will play a key role in analyzing our algorithms. 
\begin{lemma}\label{lem:contig-proximity}
For any $A\in\R^{M\times N}$, let $v\in\R^N$ be  cycle-free with respect to $A$. Then,
\[
 \|v\|_\infty\le \kappa({\cX_A})\cdot\|Av\|_1\, \quad\mbox{and}\quad \|v\|_2\le m\cdot\kappa({\cX_A})\cdot\|Av\|_2\, .
\]
\end{lemma}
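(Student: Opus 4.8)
The plan is to exploit the cycle-free hypothesis to reduce everything to a single conformal decomposition into \emph{outer} elementary vectors, and then estimate each piece using the circuit imbalance $\kappa(\cX_A)$ together with the fact that circuits of $\cX_A$ are small. First I would apply Lemma~\ref{lem:conformal} to the subspace $\cX_A$ and the vector $(v,Av)\in\cX_A$ to get a conformal circuit decomposition $(v,Av)=\sum_{k=1}^\ell (h^k,Ah^k)$, where each $(h^k,Ah^k)\in\EE(\cX_A)$ conforms to $(v,Av)$; this is precisely a generalized path-circuit decomposition of $v$. By the cycle-free assumption every $h^k$ is an outer vector, so $Ah^k\neq\mathbf{0}$. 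Conformality is the workhorse: since each $h^k$ conforms to $v$ on the $N$-coordinates and each $Ah^k$ conforms to $Av$ on the $M$-coordinates, no cancellation occurs, giving $|v(j)|=\sum_k|h^k(j)|$ coordinatewise, $\|v\|_1=\sum_k\|h^k\|_1$, and $\|Av\|_1=\sum_k\|Ah^k\|_1$.

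For the first inequality I would pick $j^\star\in\argmax_j|v(j)|$ and write $\|v\|_\infty=|v(j^\star)|=\sum_k|h^k(j^\star)|$. For each $k$ with $h^k(j^\star)\neq 0$, choose any $i_k\in\supp(Ah^k)$, which is nonempty because $h^k$ is outer; since $j^\star$ and $i_k$ both lie in $\supp((h^k,Ah^k))$, the definition of $\kappa(\cX_A)$ gives $|h^k(j^\star)|\le \kappa(\cX_A)\,|(Ah^k)(i_k)|\le \kappa(\cX_A)\,\|Ah^k\|_1$. Summing over $k$ and using $\sum_k\|Ah^k\|_1=\|Av\|_1$ yields $\|v\|_\infty\le\kappa(\cX_A)\,\|Av\|_1$.

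The key structural input for the $\ell_2$ bound is that each elementary vector has small support: $(h^k,Ah^k)$ is a circuit of the linear matroid represented by the columns of $(A\mid -I_M)$, which has rank $m$, so $|\supp((h^k,Ah^k))|\le m+1$, and outerness ($\supp(Ah^k)\neq\emptyset$) forces $|\supp(h^k)|\le m$. Applying the circuit-imbalance bound to every $j\in\supp(h^k)$ then gives $\|h^k\|_1\le m\,\kappa(\cX_A)\,\|Ah^k\|_1$, and summing produces the intermediate estimate $\|v\|_1\le m\,\kappa(\cX_A)\,\|Av\|_1$. Finally I would interpolate via $\|v\|_2^2\le\|v\|_\infty\|v\|_1$, substitute both bounds, and invoke $\|Av\|_1\le\sqrt{m}\,\|Av\|_2$ (valid since $Av$ has only $m$ coordinates) to obtain $\|v\|_2^2\le \kappa(\cX_A)\|Av\|_1\cdot m\,\kappa(\cX_A)\|Av\|_1\le m^2\kappa(\cX_A)^2\|Av\|_2^2$, i.e. $\|v\|_2\le m\,\kappa(\cX_A)\,\|Av\|_2$.

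The main obstacle is arriving at the factor $m$ rather than the naive $\sqrt{mn}$: a direct chain such as $\|v\|_2\le\sqrt{n}\,\|v\|_\infty\le\sqrt{n}\,\kappa(\cX_A)\|Av\|_1\le\sqrt{mn}\,\kappa(\cX_A)\|Av\|_2$ loses too much. Saving the factor requires both the small-support fact $|\supp(h^k)|\le m$ (which bounds $\|v\|_1$ rather than relying on the ambient dimension $n$) and the interpolation $\|v\|_2^2\le\|v\|_\infty\|v\|_1$ that combines the $\ell_\infty$ and $\ell_1$ estimates. I would also note the degenerate case $v=\mathbf{0}$ is trivial, and that $v\neq\mathbf{0}$ together with cycle-freeness forces $Av\neq\mathbf{0}$, so the right-hand sides are positive; the careful bookkeeping of conformal signs is the only remaining routine point.
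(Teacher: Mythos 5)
Your proof is correct and follows essentially the same route as the paper's: the same conformal (generalized path-circuit) decomposition into outer elementary vectors, the same use of conformity to prevent cancellation in both the $N$- and $M$-coordinates, and the same support bound $|\supp(h^k)|\le m$ (the paper derives it from linear independence of the columns in $\supp(h^k)$ forced by elementarity, you from the matroid circuit-size bound; these are the same fact). The only difference is bookkeeping in the $\ell_2$ step --- the paper bounds each piece as $\|h^k\|_2\le\sqrt{m}\cdot\kappa(\cX_A)\cdot|(Ah^k)_i|$ and applies the triangle inequality, whereas you derive $\|v\|_1\le m\cdot\kappa(\cX_A)\cdot\|Av\|_1$ and interpolate via $\|v\|_2^2\le\|v\|_\infty\|v\|_1$ --- and both yield the identical bound.
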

\begin{proof}
Consider a generalized path-circuit decomposition $v=\sum_{k=1}^\ell h^k$. 
By assumption, $Ah^k\neq {\bf0}$ for each $k$. 
Thus,  for every $j\in\supp(h^k)$ there exists an $i\in M$, such that  $|h^k(j)|\le \kappa({\cX_A}) |A_i h^k|$. For every $j\in N$, the conformity of the decomposition implies $|v(j)|=\sum_{k=1}^\ell |h^k(j)|$. Similarly, for every $i\in M$,
$|A_i v|=\sum_{k=1}^\ell |A_i h^k|$. These imply the inequality $\|v\|_\infty\le \kappa({\cX_A}) \|Av\|_1$. 

For the second inequality, note that  for any outer vector $(h^k,Ah^k)\in \cX_A$, the columns in $\supp(h^k)$ must be linearly independent. Consequently,
$\|h^k\|_2\le \sqrt{m}\cdot \kappa({\cX_A})\cdot |(Ah^k)_i|$ for each $k$ and $i\in \supp(Ah^k)$. This implies 
\[
\|v\|_2\le \sum_{k=1}^\ell \|h^k\|_2\le \sqrt{m}\cdot \kappa({\cX_A})\cdot\|Av\|_1\le m\cdot \kappa({\cX_A})\cdot\|Av\|_2\, ,\]
completing the proof.
\end{proof}

\begin{remark}\label{remark:a-norm}
We note that a similar argument shows that $\|A\|\le \sqrt{m\tau(A)}\cdot\kappa(\cX_A)$, where $\tau(A)\le m$ is the maximum size of $\supp(Ah)$ for an elementary vector $(h,Ah)\in \cX_A$.
\end{remark}

\begin{example}
If $A\in\R^{M\times N}$ is the node-arc incidence matrix of a directed graph $D=(M,N)$. The system $Ax=b$, $x\in\B(u)$
corresponds to a network flow feasibility problem.  Here, $b(i)$ is the demand of node $i\in M$, i.e., the inflow minus the outflow at $i$ is required to be $b(i)$. Recall that $A$ is a TU matrix; consequently, $(A|-I_M)$ is also TU, and $\kappa({\cX_A})=1$. Our algorithm is strongly polynomial in this setting. Note that 
 inner vectors correspond to cycles and outer vectors to paths; this motivates the term `generalized path-circuit decomposition.' We also note $\tau(A)=2$, and thus $\|A\|\le \sqrt{2|M|}$ in this case.
 \end{example}

\subsection{Optimal solutions and proximity}\label{sec:1-1}

Let
\begin{equation}\label{eq:z1}
  Z(A,u):=\{Ax\mid x\in\B(u)\}.
\end{equation}
Thus, Problem \eqref{eq:min-norm-1} is
to find the point in $Z(A,u)$ that is nearest to $b$ with respect 
to the Euclidean norm. We note that if the upper bounds $u$ are finite, $Z(A,u)$ is  called a \emph{zonotope}.

Throughout, we let $p^*$ denote the optimum value of \eqref{eq:min-norm-1}.
Note that whereas the optimal solution $x^*$ may not be unique, the vector $b^*:=Ax^*$ is unique by strong convexity; we have $p^*=\tfrac{1}{2}\|b-b^*\|^2$. 
We use
\[
\gap(x):=\tfrac{1}{2}\|Ax-b\|^2-p^*
\]
to denote the optimality gap for $x\in\B(u)$. The point $x\in\B(u)$ is an \emph{$\varepsilon$-approximate solution} if $\gap(x)\le \varepsilon$.

For a point $x\in \B(u)$, let
\[
\In{x}:=\{i\in N\mid\, x(i)=0\}\, ,\quad \Iu{x}:=\{i\in N\mid\, x(i)=u(i)\}\, ,\quad\mbox{and}\quad \J{x}:=N\setminus (\In{x}\cup \Iu{x})\, .
\]
The gradient of the objective $\tfrac{1}{2}\|Ax-b\|^2$ in \eqref{eq:min-norm-1} 
 can be written as
\begin{equation}\label{eq:gradient}
g^x:=A^\top (Ax-b)\, .
\end{equation}
We recall the first order optimality conditions.
\begin{lemma}\label{lem:optimal}
The point $x\in \B(u)$ is an optimal solution to \eqref{eq:min-norm-1} if and only if $g^x(i)=0$ for all $i\in \J{x}$, $g^x(i)\ge 0$ for all $i\in \In{x}$, and $g^x(i)\le 0$ for all $i\in \Iu{x}$.
\end{lemma}
Using Lemma~\ref{lem:contig-proximity}, we can bound the distance of any $x$ from the nearest optimal solution.
\begin{lemma}\label{lem:opt-prox}
For any  $x\in \B(u)$, there exists an optimal solution $x^*$ to \eqref{eq:min-norm-1} such that
\[
\|x-x^*\|_\infty\le \kappa({\cX_A})\cdot\|Ax-b^*\|_1\, ,
\] 
and consequently,
\[
\|x-x^*\|_2\le m\cdot \kappa({\cX_A})\cdot\|Ax-b^*\|_2\, .
\]
\end{lemma}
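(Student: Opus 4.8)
The plan is to construct the optimal solution $x^*$ so that the displacement $v := x^* - x$ is cycle-free with respect to $A$, and then apply Lemma~\ref{lem:contig-proximity} to $v$. The target vector is fixed: since $b^* = Ax^*$ is the same for every optimal solution, we have $Av = Ax^* - Ax = b^* - Ax$, so $\|Av\|_1 = \|Ax - b^*\|_1$ and $\|Av\|_2 = \|Ax - b^*\|_2$. Thus if we can produce an optimal $x^*$ for which $v$ is cycle-free, Lemma~\ref{lem:contig-proximity} immediately yields both inequalities, since $\|x - x^*\|_\infty = \|v\|_\infty$ and $\|x-x^*\|_2 = \|v\|_2$.

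First I would fix an arbitrary optimal solution $\tilde{x}$ and consider $v = \tilde{x} - x$. Take a generalized path-circuit decomposition $v = \sum_{k=1}^\ell h^k$ with respect to $A$, guaranteed by the existence of conformal circuit decompositions (Lemma~\ref{lem:conformal} applied in $\cX_A$). The inner vectors are exactly those $h^k$ with $Ah^k = \mathbf{0}$. The key move is to delete all inner vectors: set $x^* := x + \sum_{k\,:\,Ah^k \neq \mathbf{0}} h^k$, i.e.\ remove the inner (cycle) components from the correction. Because each inner vector $h^k$ satisfies $Ah^k = \mathbf{0}$, removing them does not change the image: $Ax^* = Ax + A\sum_{k} h^k - A\sum_{\text{inner}} h^k = A\tilde{x} = b^*$. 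Hence $x^*$ still attains the optimal value $p^* = \tfrac12\|b - b^*\|^2$, so $x^*$ is optimal provided it is feasible.

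The main obstacle is verifying feasibility $x^* \in \B(u)$, and this is exactly where conformality does the work. Since each $h^k$ conforms to $v = \tilde{x} - x$, every nonzero coordinate of every $h^k$ has the same sign as the corresponding coordinate of $v$, and $|v(j)| = \sum_k |h^k(j)|$. Therefore the partial sum $x^*(j) = x(j) + \sum_{\text{outer}} h^k(j)$ lies between $x(j)$ and $\tilde{x}(j) = x(j) + v(j)$ for each coordinate $j$: moving from $x$ toward $\tilde{x}$ by a conformal sub-collection of the decomposition never overshoots in either direction. Since both $x$ and $\tilde{x}$ lie in the box $\B(u)$, and $x^*$ lies coordinatewise in the interval $[x(j), \tilde{x}(j)]$, we conclude $x^* \in \B(u)$. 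Finally, $v' := x^* - x = \sum_{\text{outer}} h^k$ is a generalized path-circuit decomposition consisting only of outer vectors; one checks that any decomposition of $v'$ must remain cycle-free (a nonzero inner component would inject a coordinate into $\ker A$ forcing $Av' $ to change, contradicting $Av' = b^* - Ax$ being the minimal-length correction—more directly, $v'$ conforms to $v$ and has no inner part available since its image under $A$ equals $b^*-Ax$ while any inner vector would be cancellable). Hence $v'$ is cycle-free, and Lemma~\ref{lem:contig-proximity} applied to $v'$ gives
\[
\|x - x^*\|_\infty = \|v'\|_\infty \le \kappa(\cX_A)\cdot \|Av'\|_1 = \kappa(\cX_A)\cdot \|Ax - b^*\|_1,
\]
with the $\ell_2$ bound following identically from the second inequality of Lemma~\ref{lem:contig-proximity}.
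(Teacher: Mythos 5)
Your construction of $x^*$ (strip the inner vectors out of a conformal decomposition of $\tilde x - x$) is a genuinely different route from the paper, which instead selects the optimal solution minimizing $\|x-x^*\|$ and shows cycle-freeness by contradiction: an inner vector $g$ in any decomposition would yield another optimal solution $x^*+g$ strictly closer to $x$. Your feasibility argument (each coordinate of $x^*$ lies between $x(j)$ and $\tilde x(j)$ by conformity) and the identity $Ax^*=b^*$ are both correct. The problem is the last step. Being \emph{cycle-free} is a universal statement: \emph{all} generalized path-circuit decompositions must consist of outer vectors only. You exhibit one all-outer decomposition of $v'=\sum_{\text{outer}}h^k$, but that does not rule out a different conformal decomposition of $v'$ containing an inner vector, and your parenthetical justification is backwards: an inner vector $h$ satisfies $Ah=\mathbf{0}$, so its presence never ``forces $Av'$ to change'' --- that is precisely why it is undetectable from $Av'$. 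A concrete obstruction in the network-flow case ($\kappa(\cX_A)=1$): take the two paths $1\to 2\to 3$ and $5\to 3\to 2\to 4$ on arcs $a\colon 1\to2$, $b\colon 2\to3$, $c\colon 3\to2$, $d\colon2\to4$, $e\colon5\to3$. Each path conforms to the sum $f=(1,1,1,1,1)$ and to $Af$, so $f$ has an all-outer decomposition; yet $f$ also decomposes conformally as the cycle $\{b,c\}$ plus the paths $1\to2\to4$ and $5\to3$, so $f$ is \emph{not} cycle-free in the paper's sense. Hence the hypothesis of Lemma~\ref{lem:contig-proximity} is not verified by your argument.

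The gap is repairable in either of two ways. (i) Observe that the \emph{proof} of Lemma~\ref{lem:contig-proximity} only uses a single decomposition in which every piece is outer; since your $v'$ comes equipped with such a decomposition (each outer $h^k$ does conform to $(v',Av')$ because $Av'=Av$ and all pieces share signs coordinatewise with $v$), you may run that proof directly on it rather than invoking the lemma's statement. (ii) Replace the one-shot stripping by the paper's minimality device: among all optimal solutions choose $x^*$ with $\|x-x^*\|$ minimal; then \emph{any} inner vector $g$ in \emph{any} decomposition of $x-x^*$ would produce the feasible optimal solution $x^*+g$ strictly closer to $x$, a contradiction, which is exactly the universal statement needed. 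As written, though, the proof asserts an unproven (and in general false) intermediate claim, so it is not complete.
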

\begin{proof}
Let us select an optimal solution $x^*$ to \eqref{eq:min-norm-1} such that $\|x-x^*\|$ is minimal. We show that $x-x^*$ is cycle-free 
w.r.t.~$A$; 
the statements then follow from Lemma~\ref{lem:contig-proximity}. 

For a contradiction, assume a generalized path-circuit decomposition of $x-x^*$ contains an inner vector $g$, i.e., $Ag=\bf0$. 
By conformity of the decomposition, for $\bar x=x^*+g$ we have $\bar x\in \B(u)$ and $A\bar x=A x^*$. Thus, $\bar x$ is another optimal solution, but $\|x-\bar x\|<\|x-x^*\|$, a contradiction.
\end{proof}
\subsection{The centroid mapping}\label{sec:MNPZ}


Let us denote by $3^N$ the set of all ordered pairs $(I_0,I_1)$ of 
disjoint subsets $I_0,I_1\subseteq N$, and let  $I_*:=\{i\in N\mid u(i)<\infty\}$. 
For any $(I_0,I_1)\in 3^N$ with $I_1\subseteq I_*$, we let
\begin{equation}\label{eq:affface1}
\bL(I_0,I_1):=\{x\in \R^N\mid \forall i\in I_0: x(i)=0,
\ \forall i\in I_1: x(i)=u(i)\  
\}\, .
\end{equation}

We call
$\{Ax \mid x\in \B(u)\cap \bL(I_0,I_1)\} \subseteq Z(A,u)$
a \emph{pseudoface} of  $Z(A,u)$. We note that every face of $Z(A,u)$ is a pseudoface, but there might be pseudofaces that do not correspond 
to any face.

We define a \emph{centroid set} for $(I_0,I_1)$ as
\begin{equation}\label{eq:ctr-def}
  \cC(I_0,I_1):=\arg\min_y\left\{ \|Ay-b\|\mid y\in \bL(I_0,I_1))\right\}\, .
\end{equation}
\begin{proposition} For $(I_0,I_1)\in 3^N$ with $I_1\subseteq I_*$, 
$\cC(I_0,I_1)$ is an affine subspace of $\R^N$, and   
for some $w\in \R^M$, it holds that $Ay=w$ for every $y\in\cC(I_0,I_1)$. 
\end{proposition}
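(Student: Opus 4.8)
The plan is to reduce the problem to a standard orthogonal-projection statement in $\R^M$, where uniqueness of the projection supplies the constant value $w$. First I would parametrize the feasible affine subspace. Because $I_1\subseteq I_*$, every coordinate fixed to an upper bound has $u(i)<\infty$, so $\bL(I_0,I_1)$ is nonempty; I would fix a reference point $c\in\bL(I_0,I_1)$ (say $c(i)=u(i)$ for $i\in I_1$ and $c(i)=0$ otherwise) and write every $y\in\bL(I_0,I_1)$ as $y=c+z$, where $z$ ranges over the \emph{linear} subspace $V:=\{z\in\R^N\mid z(i)=0 \text{ for all } i\in I_0\cup I_1\}$.

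Second, I would rewrite the objective. Setting $b':=b-Ac$, minimizing $\|Ay-b\|$ over $y\in\bL(I_0,I_1)$ is equivalent to minimizing $\|Az-b'\|$ over $z\in V$. Introducing the linear subspace $W:=\{Az\mid z\in V\}\subseteq\R^M$ (the span of the columns $A^i$ with $i\notin I_0\cup I_1$), this is in turn equivalent to minimizing $\|w-b'\|$ over $w\in W$.

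Third, I would invoke uniqueness of the orthogonal projection. Since $W$ is a finite-dimensional (hence closed) linear subspace of $\R^M$, there is a \emph{unique} nearest point $w^\star:=\operatorname{proj}_W(b')$. Consequently a point $y=c+z$ with $z\in V$ lies in $\cC(I_0,I_1)$ if and only if $Az=w^\star$, equivalently $Ay=Ac+w^\star=:w$. This is exactly the constant-value claim: $Ay=w$ for all $y\in\cC(I_0,I_1)$. The affine-subspace structure then follows immediately, since
\[
\cC(I_0,I_1)=\{y\in\bL(I_0,I_1)\mid Ay=w\}=y_0+\big(V\cap\ker(A)\big)
\]
for any fixed minimizer $y_0$, exhibiting the centroid set as a translate of the linear subspace $V\cap\ker(A)$.

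The only real content is the uniqueness of the projected value $w^\star$; the rest is routine linear algebra. I would stress the point already flagged in the text: although the minimizer $y$ itself need not be unique, because the columns $\{A^i\mid i\in\J{x}\}$ may be linearly dependent and hence $V\cap\ker(A)$ may be nontrivial, the vector $Ay$ is nonetheless pinned down to the single projection point $w$, which is precisely what the proposition asserts.
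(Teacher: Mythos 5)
Your proof is correct. The paper states this proposition without proof, treating it as a standard consequence of convexity (it invokes the same principle earlier when asserting that $b^*=Ax^*$ is unique ``by strong convexity''), and your reduction to uniqueness of the orthogonal projection of $b-Ac$ onto the column span of $\{A^i \mid i\notin I_0\cup I_1\}$ is exactly that standard argument, carried out in full; your closing identification $\cC(I_0,I_1)=y_0+(V\cap\ker(A))$ also matches the paper's later characterization of the centroid set in Proposition~2.8 via the first-order condition $(A^J)^\top(Ay-b)=\mathbf{0}$.
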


The \emph{centroid mapping} $\ctr:\, \B(u)\to \R^N$ is a mapping that satisfies 
\[
\ctr(\ctr(x))=\ctr(x)\quad\mbox{and}\quad
\ctr(x)\in \cC(\In{x},\Iu{x})\, \quad \forall x\in\B(u)\, .
\]
We say that $x\in \B(u)$ is a \emph{stable point} if $\ctr(x)=x$.
A simple, `oblivious' centroid mapping arises by taking a minimum-norm point of the centroid set:
\begin{equation}\label{eq:pseudoinv}
\ctr(x):=\argmin\{\|y\|\mid y\in\cC(\In{x},\Iu{x})\}\, . 
\end{equation}
However, this mapping has some undesirable properties. For example, we may have an iterate $x$ that is already in $\cC(\In{x},\Iu{x})$, but $\ctr(x)\neq x$. 
Instead, we  aim for centroid mappings that move the current point `as little as possible'. This can be formalized as follows.
The centroid mapping $\ctr$ is called \emph{cycle-free}, if the vector
$\ctr(x)-x$ is cycle-free w.r.t.~$A$ 
for  every $x\in\B(u)$. The next claim describes a general class of cycle-free centroid mappings.
\begin{lemma}
For every $x\in \B(u)$, let  $D(x)\in\R^{N\times N}_{>0}$be a positive diagonal matrix. Then, 
\begin{equation}\label{eq:ctr-D-min}
\ctr(x):=\argmin\{\|D(x)(y-x)\|\mid y\in\cC(\In{x},\Iu{x})\}\,  
\end{equation}
defines a cycle-free centroid mapping.
\end{lemma}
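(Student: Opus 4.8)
The plan is to check three things in turn: that \eqref{eq:ctr-D-min} defines $\ctr(x)$ uniquely, that $\ctr$ satisfies the two defining axioms of a centroid mapping, and finally that $\ctr(x)-x$ is cycle-free, which is the only substantive part. For well-definedness, I would first note that $x\in\bL(\In x,\Iu x)$, so this affine subspace is nonempty and $\cC(\In x,\Iu x)$ is the nonempty solution set of the least-squares problem \eqref{eq:ctr-def} over it; since $D(x)$ is positive diagonal, hence invertible, the map $y\mapsto\|D(x)(y-x)\|^2$ is strictly convex, so its minimum over the nonempty closed convex set $\cC(\In x,\Iu x)$ is attained at a unique point. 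Membership $\ctr(x)\in\cC(\In x,\Iu x)$ is then immediate. For idempotence I would set $x'=\ctr(x)$ and observe $\In x\subseteq\In{x'}$ and $\Iu x\subseteq\Iu{x'}$ (as $x'\in\bL(\In x,\Iu x)$), whence $\bL(\In{x'},\Iu{x'})\subseteq\bL(\In x,\Iu x)$. Because $x'$ minimizes $\|Ay-b\|$ over the larger subspace and belongs to the smaller one, it also minimizes over $\bL(\In{x'},\Iu{x'})$, i.e.\ $x'\in\cC(\In{x'},\Iu{x'})$; then the choice $y=x'$ drives the objective $\|D(x')(y-x')\|$ to $0$, forcing $\ctr(x')=x'$.

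The crux is cycle-freeness, which I would argue by contradiction in the style of Lemma~\ref{lem:opt-prox}. Writing $x'=\ctr(x)$ and $v=x'-x$, suppose some generalized path-circuit decomposition of $v$ contains an inner vector $g$, so $Ag=\mathbf 0$ and $g$ conforms to $v$. Since $v(i)=0$ for $i\in\In x\cup\Iu x$ (both $x$ and $x'$ take the value $0$ or $u(i)$ there), conformity forces $g(i)=0$ at these coordinates, so $\supp(g)\subseteq\J x$. Hence $x'-tg\in\bL(\In x,\Iu x)$ for every $t$, and $A(x'-tg)=Ax'$ because $Ag=\mathbf0$, so $x'-tg\in\cC(\In x,\Iu x)$. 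I would then examine $f(t):=\|D(x)(v-tg)\|^2=\|D(x)v\|^2-2t\dotp{D(x)^2v}{g}+t^2\|D(x)g\|^2$: conformity gives $v(i)g(i)>0$ on the nonempty set $\supp(g)$ and $D(x)_{ii}^2>0$, so $\dotp{D(x)^2v}{g}>0$ and therefore $f'(0)<0$. Thus $f(t)<f(0)=\|D(x)(x'-x)\|^2$ for all sufficiently small $t>0$, producing a point of $\cC(\In x,\Iu x)$ strictly closer to $x$ in the $D(x)$-weighted norm than $x'=\ctr(x)$, contradicting the minimality defining $\ctr(x)$. Consequently no decomposition of $v$ contains an inner vector, i.e.\ $v$ is cycle-free.

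I expect the last paragraph to be the only real obstacle, and the key subtlety there is that the naive full exchange $x'-g$ need not help, since $\|D(x)(v-g)\|^2-\|D(x)v\|^2=-2\dotp{D(x)^2v}{g}+\|D(x)g\|^2$ may well be positive. The remedy is to perturb along the one-parameter family $x'-tg$ and let the strictly negative first-order term dominate for small $t>0$; this is exactly the point where positivity of the diagonal of $D(x)$ and the conformity (sign-consistency) of the generalized path-circuit decomposition combine to give the contradiction.
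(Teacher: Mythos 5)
Your proof is correct, and its core is the same contradiction the paper uses: an inner vector $g$ in a conformal decomposition of $v=\ctr(x)-x$ lies in $\kernel(A)$ and is supported in $\J{x}$, so subtracting a multiple of it stays inside $\cC(\In{x},\Iu{x})$ while strictly decreasing the $D(x)$-weighted distance to $x$, contradicting minimality. The only substantive difference is the step length, and here your closing remark is mistaken: the paper performs the full exchange $y'=\ctr(x)-g$, and this always works. You treat $g$ as an arbitrary vector conforming to $v$, but it is a \emph{term} of a conformal decomposition $v=\sum_k h^k$, say $g=h^{k_0}$; hence $v-g=\sum_{k\neq k_0}h^k$ is a sum of vectors conforming to $v$, so there is no cancellation coordinate-wise: $|v(i)-g(i)|=|v(i)|-|g(i)|\le|v(i)|$, with strict inequality on $\supp(g)$. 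Consequently $\|D(x)(v-g)\|<\|D(x)v\|$ for \emph{every} positive diagonal $D(x)$; equivalently, the quantity $-2\dotp{D(x)^2v}{g}+\|D(x)g\|^2$ that you feared "may well be positive" is automatically negative, since each term $D_{ii}^2\,g(i)\bigl(g(i)-2v(i)\bigr)$ is negative on $\supp(g)$ (same signs and $|g(i)|\le|v(i)|$). Your small-$t$ perturbation is a perfectly valid workaround that needs only the sign condition $v(i)g(i)>0$ rather than the coordinate-wise domination, so it is marginally more robust; and your verification of well-definedness (strict convexity of the weighted norm over the nonempty affine set $\cC(\In{x},\Iu{x})$) and of idempotence, both of which the paper takes for granted and does not prove, is a sound addition.
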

\begin{proof}
For a contradiction, assume $y-x$ is not cycle-free for $y=\ctr{(x)}$, that is, a generalized path-circuit decomposition contains an inner vector $z$. For $y'=y-z$ we have $Ay'=Ay$, meaning that $y'\in \cC(\In{x},\Iu{x})$. This is a contradiction, since $\|D(x)(y'-x)\|<\|D(x)(y-x)\|$ for any positive diagonal matrix $D(x)$.
\end{proof}
We emphasize that $D(x)$ in the above statement is a function of $x$ and can be any positive diagonal matrix. Note also that the diagonal entries for indices in $\In{x}\cup\Iu{x}$ do not matter. In our experiments, defining $D(x)$ with diagonal entries $1/x(i)+1/(u(i)-x(i))$ for $i\in J(x)$ performs particularly well. Intuitively, this choice aims to move less the coordinates close to the boundary.\footnote{
Note that the weights for $i\in I_i(x)\cup I_1(x)$ do not matter, since we force $y(i)=x(i)$ on these coordinates. The choice $1/x(i)+1/(u(i)-x(i))$ would set $\infty$ on these coordinates.}
The next proposition follows from Lagrangian duality, and provides a way to compute $\ctr(x)$ as in \eqref{eq:ctr-D-min} by solving a system of linear equations.
\begin{proposition}\label{prop:centr}
For a partition $N=I_0\cup I_1\cup J$, the centroid set can be written as
\[
  \cC(I_0,I_1)=\left\{y\in \bL(I_0,I_1)\mid ({A^J})^\top(Ay-b)=\bf{0}\right\}\, .
  \]
  For $(I_0,I_1,J)=(\In{x},\Iu{x},\J{x})$ and $D=D(x)$,
the point $y=\ctr(x)$ as in \eqref{eq:ctr-D-min} can be obtained as the unique solution to the system of linear equations
\[
\begin{aligned}
(A^{J})^\top Ay&=(A^{J})^\top b&\\ 
y_{J}+(D_J^J)^{-1}(A^{J})^\top A^{J} \lambda&=x_{J}&\\
y(i)&=0&\quad\forall i\in I_0\\
y(i)&=u(i)&\quad\forall i\in I_1\\
&\lambda\in\R^{J}&\, .
\end{aligned}
\]
\end{proposition}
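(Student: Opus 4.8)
The plan is to prove Proposition~\ref{prop:centr} in two parts. First I would establish the characterization of the centroid set $\cC(I_0,I_1)$, and then I would derive the linear system whose unique solution gives the weighted projection $\ctr(x)$ from \eqref{eq:ctr-D-min}.

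\textbf{Part 1: characterizing the centroid set.} Recall $\cC(I_0,I_1)=\argmin_y\{\|Ay-b\|\mid y\in\bL(I_0,I_1)\}$. Since $\bL(I_0,I_1)$ fixes the coordinates in $I_0\cup I_1$ to constants, the only free variables are $y_J$, and minimizing $\tfrac12\|Ay-b\|^2$ over an affine subspace is an unconstrained convex quadratic in $y_J$. The first-order optimality condition is that the gradient $g^y=A^\top(Ay-b)$ must be orthogonal to the feasible directions, which are exactly the directions supported on $J$. This gives $(A^J)^\top(Ay-b)=\mathbf{0}$, proving $\cC(I_0,I_1)=\{y\in\bL(I_0,I_1)\mid (A^J)^\top(Ay-b)=\mathbf{0}\}$. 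I would note that this matches the optimality conditions in Lemma~\ref{lem:optimal} restricted to the coordinates that are free to move.

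\textbf{Part 2: deriving the linear system via Lagrangian duality.} The point $y=\ctr(x)$ from \eqref{eq:ctr-D-min} minimizes $\tfrac12\|D(y-x)\|^2$ subject to $y\in\cC(I_0,I_1)$; by Part~1 the constraints are the affine ones from $\bL(I_0,I_1)$ together with $(A^J)^\top(Ay-b)=\mathbf{0}$. The plan is to form the Lagrangian with multiplier $\lambda\in\R^J$ for the constraint $(A^J)^\top Ay=(A^J)^\top b$, treating the fixed-coordinate constraints directly. Setting the gradient in $y_J$ to zero yields a stationarity condition of the form $D_J^2(y_J-x_J) + (A^J)^\top A^J\lambda = \mathbf{0}$ (since $(A^J)^\top A y$ differentiates in $y_J$ to $(A^J)^\top A^J$ acting on the $J$-block), which after rescaling by $(D_J^J)^{-2}$ becomes the second displayed equation; I would need to check that the diagonal-matrix notation $(D_J^J)^{-1}$ in the statement is consistent with my derivation, which likely hides a factor absorbed into the definition of $\lambda$. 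Combining stationarity with primal feasibility $(A^J)^\top Ay=(A^J)^\top b$ and the fixed coordinates $y(i)=0$ for $i\in I_0$, $y(i)=u(i)$ for $i\in I_1$ gives exactly the stated system.

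\textbf{Uniqueness.} Finally I would argue the system has a unique solution. The objective $\tfrac12\|D(x)(y-x)\|^2$ is strictly convex in $y_J$ (as $D(x)$ is positive diagonal, so $D_J$ has full rank on the free coordinates) and the feasible set $\cC(\In{x},\Iu{x})$ is a nonempty affine subspace, so the constrained minimizer $y$ is unique; the multiplier $\lambda$ is determined up to the kernel of $(A^J)^\top A^J$, but the primal solution $y$ is unique, which is what the proposition asserts.

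\textbf{Main obstacle.} The hard part will be matching the precise algebraic form of the stationarity equation, in particular verifying the placement of $(D_J^J)^{-1}$ and ensuring the Lagrangian bookkeeping (which block of constraints gets which multiplier, and how squaring $D$ versus $D$ itself is reconciled) reproduces the stated system verbatim rather than an equivalent rescaled version. Checking that the combined system is genuinely square and has the claimed unique solution, despite $(A^J)^\top A^J$ possibly being singular when the columns $A^J$ are dependent, is the subtle point and relies on coupling the $\lambda$-equation with the feasibility equation rather than reading either in isolation.
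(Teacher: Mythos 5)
Your proposal is correct and follows exactly the route the paper intends: the paper gives no written proof, stating only that the proposition ``follows from Lagrangian duality,'' and your two-part argument (first-order optimality over the affine subspace $\bL(I_0,I_1)$ for the centroid-set characterization, then KKT stationarity for the weighted projection, with uniqueness of $y$ from strict convexity of $\|D(x)(y-x)\|^2$ in $y_J$ even though $\lambda$ is only determined modulo $\kernel((A^J)^\top A^J)$) is precisely that derivation made explicit. Your flagged concern about $(D_J^J)^{-1}$ versus $(D_J^J)^{-2}$ is a fair catch --- a literal Lagrangian computation for the objective $\tfrac12\|D_J^J(y_J-x_J)\|^2$ produces $(D_J^J)^{-2}$, so the system as printed corresponds to weighting by $D(x)^{1/2}$ rather than $D(x)$; this is a harmless notational rescaling of the weight matrix and does not affect the structure or uniqueness claims.
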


\section{The Update-and-Stabilize Framework}\label{sec:new}
\SetKwFunction{Update}{Update}

Now we describe a general algorithmic framework  \textsc{MNPZ}$(A,b,u)$ for solving \eqref{eq:min-norm-1}, shown in Algorithm~\ref{alg:mnp2}.
 Similarly to Wolfe's MNP algorithm, the algorithm comprises major and minor cycles. We maintain a point $x\in \B(u)$, and $x$ is stable at the end of every major cycle.
 Each major cycle  starts by calling the subroutine $\Update(x)$; the only general requirement on this subroutine is as follows: 
\begin{enumerate}[label=(U\arabic*), ref=U\arabic*]
\item\label{prop:update-1}
for $y=\Update(x)$, $y=x$ if and only if $x$ is optimal to \eqref{eq:min-norm-1}, and $\|Ay-b\|<\|Ax-b\|$ otherwise, and
\item\label{prop:update-2} if $y\neq x$, then for any $\lambda\in [0,1)$, 
$z=\lambda y+(1-\lambda)x$ satisfies $\|Ay-b\|<\|Az-b\|$.
\end{enumerate}
Property~\eqref{prop:update-1} can be obtained from any first order algorithm; we introduce some important examples in Section~\ref{sec:update}. Property~\eqref{prop:update-2} might be violated if using a fixed step-length, which is a common choice. 
In order to guarantee \eqref{prop:update-2}, we can post-process the first order update: choose $y$ as the optimal point on the line segment $[x,y']$, where $y'$ is the update found by the fixed-step update.

The algorithm terminates in the first major cycle when $x=\Update(x)$.
Within each major cycle, the minor cycles repeatedly use the centroid mapping $\Psi$. As long as $w:=\Psi(x)\neq x$, i.e., $x$ is not stable, we set $x:=w$ if $w\in\B(u)$; otherwise, we set the next $x$ as the intersection of the line segment $[x,w]$ and the boundary of $\B(u)$. The requirement \eqref{prop:update-1} is already sufficient to show finite termination.

\begin{algorithm}[htb]
\caption{\textsc{MNPZ}$(A,b,u)$}\label{alg:mnp2}
\DontPrintSemicolon
\SetKwInOut{Input}{Input}\SetKwInOut{Output}{Output}
\Input{$A\in\R^{M\times N}$, $b\in \R^M$, $u\in(\R\cup\{\infty\})^N$} 
\Output{An optimal solution $x$ to \eqref{eq:min-norm-1}} 
$x\gets$initial point from $\B(u)$ ; \\
\Repeat{$x=\Update(x)$}{
$x\gets \Update(x)$ ; \tcp*{Major cycle}
$w\gets \Psi(x)$ ;\;
\While{$\Psi(x)\neq x$\tcp*{Minor cycle}}{
   $\alpha^*\gets\arg\max\{\alpha\in [0,1]\mid x+\alpha (w-x)\in \B(u)\}$ ;\;
    \label{eq:find-alpha}
   $x\gets x+\alpha^* (w-x) $ ;\;
   $w\gets\Psi(x)$ ;\;
}
$x\gets w$ ;
}
\Return{$x$}\;
\end{algorithm}
\begin{theorem}\label{thm:finite}
Consider any $\Update(x)$ subroutine 
that satisfies \eqref{prop:update-1} and any centroid mapping $\Psi$.
The algorithm \textsc{MNPZ}$(A,b,u)$ finds an optimal solution to \eqref{eq:min-norm-1} within $3^n$ major  cycles. 
 Every major cycle contains at most $n$ minor cycles.
\end{theorem}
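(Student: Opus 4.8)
The plan is to prove the three assertions in turn—correctness, the $n$ bound on minor cycles, and the $3^n$ bound on major cycles—where the last item will also supply finiteness. Correctness comes for free from the exit rule: the \texttt{Repeat} loop halts exactly when $x=\Update(x)$, which by \eqref{prop:update-1} holds if and only if $x$ is optimal for \eqref{eq:min-norm-1}. So the real content is in the two counting bounds, and I would set up a monovariant for each: the size of the active set for minor cycles, and the objective value $\phi(x):=\tfrac12\|Ax-b\|^2$ for major cycles.

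For the minor cycles, fix a major cycle and track $S(x):=\In{x}\cup\Iu{x}$. In each iteration $w=\ctr(x)\in\cC(\In{x},\Iu{x})\subseteq\bL(\In{x},\Iu{x})$, so $w$ agrees with $x$ on every coordinate of $S(x)$; hence along the whole segment $[x,w]$ these coordinates stay pinned at their bounds and only coordinates of $\J{x}$ move. I would then split on $\alpha^*$. If $\alpha^*<1$, the updated $x'=x+\alpha^*(w-x)$ lies on $\partial\B(u)$, and since $S(x)$-coordinates do not move, the coordinate newly attaining a bound lies in $\J{x}$; thus $S(x)\subsetneq S(x')$ and $|S|$ strictly increases. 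If $\alpha^*=1$, then $x'=w=\ctr(x)$, and idempotence $\ctr(\ctr(x))=\ctr(x)$ gives $\ctr(x')=x'$, so the loop terminates at once. Finally, if ever $S(x)=N$ then $\J{x}=\emptyset$, $\bL(\In{x},\Iu{x})=\{x\}$, and $x$ is stable. Consequently every \emph{non-terminal} minor cycle is of the $\alpha^*<1$ type, strictly increasing $|S|$ while leaving $|S|\le n-1$; this caps the number of minor cycles in a major cycle at $n$.

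For the major cycles I would establish strict monotonicity of $\phi$ across major cycles. Within a major cycle, the \Update step strictly decreases $\phi$ whenever $x$ is not yet optimal, by \eqref{prop:update-1}. The minor cycles never increase $\phi$: each replaces $x$ by a point of $[x,w]$ with $w=\ctr(x)$, and since $x\in\bL(\In{x},\Iu{x})$ while $w$ minimizes $\|Ay-b\|$ over that affine subspace, convexity of $y\mapsto\|Ay-b\|$ yields $\phi(x')\le\phi(x)$. Hence the stable points $x^{(1)},x^{(2)},\dots$ produced at the ends of successive (non-final) major cycles satisfy $\phi(x^{(1)})>\phi(x^{(2)})>\cdots$. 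To turn this into the bound $3^n$, I would argue that the active patterns $(\In{x^{(k)}},\Iu{x^{(k)}})$ are pairwise distinct: a stable point lies in its own centroid set $\cC(\In{x},\Iu{x})$, and all points of a centroid set share the same image $Ay$, hence the same value of $\phi$ (Section~\ref{sec:MNPZ}); so two stable points with the same pattern would have equal objective, contradicting strict monotonicity. Since there are only $3^n$ ordered pairs of disjoint subsets of $N$, at most $3^n$ major cycles occur, and in particular the algorithm terminates.

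The step I expect to be the genuine obstacle—rather than the routine monovariant bookkeeping—is this last one. A tempting but flawed shortcut is to bound the number of stable points directly by the number of active patterns; this fails because the centroid mapping $\ctr$ may depend on $x$ beyond its active pattern (e.g.\ the local-norm variant in \eqref{eq:ctr-D-min}), so distinct stable points can in principle share a pattern. The clean resolution is to avoid counting stable points and instead combine strict monotonicity of $\phi$ with the constancy of $A$ (hence of $\phi$) on each centroid set, which rules out pattern repetition and gives the $3^n$ bound without any uniqueness assumption on $\ctr$.
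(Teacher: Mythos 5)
Your proof is correct and follows essentially the same route as the paper's: optimality from the exit condition via \eqref{prop:update-1}, at most $n$ minor cycles per major cycle because $\In{x}\cup\Iu{x}$ only grows and strictly grows whenever $\alpha^*<1$, and at most $3^n$ major cycles because the objective value at a stable point is determined by its active pattern $(\In{x},\Iu{x})$ (constant $Ay$ on the centroid set) while the objective strictly decreases across major cycles, so no pattern can repeat. Your treatment is somewhat more explicit than the paper's (notably the $\alpha^*=1$ case via idempotence of $\Psi$ and the observation that distinct stable points may share a pattern), but the underlying argument is the same.
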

\begin{proof}
Requirement \eqref{prop:update-1} guarantees that if the algorithm terminates, it returns an optimal solution. 
We claim that the same sets $(I_0,I_1)$ cannot appear as $(\In{x},\Iu{x})$ at the end of two different major cycles; this implies the bound on the number of major cycles. To see this, we note that for $x=\Psi(x)$, $x\in\cC(\In{x},\Iu{x})=\cC(I_0,I_1)$; thus, 
$\|Ax-b\|=\min\left\{\|Az-b\|\mid \ z\in \bL(I_0,I_1))\right\}$. 
By \eqref{prop:update-1}, $\|Ay-b\|<\|Ax-b\|$ at the beginning of every major cycle. Moreover, it follows from the definition of the centroid mapping that  $\|Ax-b\|$ is non-increasing in every minor cycle. 
To bound the number of minor cycles in a major cycle, note that the set $\In{x}\cup \Iu{x}\subseteq N$ is extended in every minor cycle.
\end{proof}

\subsection{The Update subroutine}\label{sec:update}
We can implement the $\Update(x)$ subroutine satisfying \eqref{prop:update-1} and \eqref{prop:update-2} using various first order methods for constrained optimization. 

Recall the gradient $g^x$ from \eqref{eq:gradient}; 
we use $g=g^x$ when $x$ is clear from the context. 
The following property of stable points can be compared to the optimality condition in Lemma~\ref{lem:optimal}.
\begin{lemma}\label{lem:grad-x}
If $x(=\ctr(x))$ is a stable point, then
 $g^x(j)=0$ for all $j\in \J{x}$.
\end{lemma}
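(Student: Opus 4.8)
The plan is to show that stability of $x$ forces $x$ to be an \emph{unconstrained} minimizer of the smooth convex objective over an affine subspace, and then read off the first-order optimality condition coordinate by coordinate. First I would unpack the definition: since $x$ is stable we have $\ctr(x)=x$, and by the defining property of the centroid mapping, $\ctr(x)\in\cC(\In{x},\Iu{x})$. Hence $x$ minimizes $\tfrac{1}{2}\|Ay-b\|^2$ over all $y$ in the affine subspace $\bL(\In{x},\Iu{x})$, whose free coordinates are precisely those in $\J{x}$ (the coordinates in $\In{x}$ are pinned to $0$ and those in $\Iu{x}$ to $u(i)$).

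Next I would invoke the first-order optimality condition for minimizing a differentiable convex function over an affine subspace: the gradient at the minimizer must be orthogonal to the direction (tangent) space of that subspace. The direction space of $\bL(\In{x},\Iu{x})$ is exactly $\operatorname{span}\{e_j\mid j\in\J{x}\}$, so orthogonality of the gradient $g^x=A^\top(Ax-b)$ to each $e_j$ yields $g^x(j)=\dotp{g^x}{e_j}=0$ for every $j\in\J{x}$, which is the claim. Equivalently, this is immediate from Proposition~\ref{prop:centr}: applying its characterization of $\cC$ with $(I_0,I_1,J)=(\In{x},\Iu{x},\J{x})$ and using $x\in\cC(\In{x},\Iu{x})$ gives $(A^{\J{x}})^\top(Ax-b)=\mathbf{0}$, and the $j$-th entry of this vector is exactly $g^x(j)$.

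I do not anticipate a genuine obstacle here; the statement is a direct consequence of the variational definition of the centroid set. The only point that deserves care is that the minimization defining $\cC$ is over the \emph{affine subspace} $\bL(\In{x},\Iu{x})$ rather than over the box face $\B(u)\cap\bL(\In{x},\Iu{x})$; this is what allows us to conclude the gradient condition with equality, rather than the one-sided inequalities of Lemma~\ref{lem:optimal}, since along each coordinate $j\in\J{x}$ we are free to move in both directions.
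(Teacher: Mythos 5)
Your proof is correct and ultimately takes the same route as the paper, which simply cites Proposition~\ref{prop:centr} to get $(A^{\J{x}})^\top(Ax-b)=\mathbf{0}$; your additional unpacking of the first-order optimality condition over the affine subspace $\bL(\In{x},\Iu{x})$ is a valid elaboration of why that proposition holds but does not change the argument.
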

\begin{proof}
This directly follows from Proposition~\ref{prop:centr} that 
asserts $(A^{J(x)})^\top (Ax-b)=\bf0$. 
\end{proof}
We now describe three classical options. We stress that the centroid mapping $\ctr$ can be chosen independently from the update step.
\paragraph{The Frank--Wolfe update}
The Frank--Wolfe or \emph{conditional gradient} method is applicable only in the case when $u(i)$ is finite for every $i\in N$. In every update step, we start by computing $\bar y$ as the minimizer of the linear objective $\dotp{g}{y}$ 
over $\B(u)$, that is, 
\begin{equation}\label{eq:bar-y}
\bar y \in\arg\min\{\dotp{g}{y}\mid y\in\B(u)\}\, .
\end{equation}
 We set $\Update(x):=x$ if $\dotp{g}{\bar y}=\dotp{g}{x}$, or $y=\Update(x)$ is selected so that $y$ minimizes $\tfrac{1}{2}\|Ay-b\|^2$ on the line segment $[x,\bar y]$.

Clearly, $\bar y(i)=0$ whenever $g(i)>0$, and $\bar y(i)=u(i)$ whenever $g(i)<0$. However, $\bar y(i)$ can be chosen arbitrarily if $g(i)=0$. In this case, we keep 
 $\bar y(i)=x(i)$; this will be significant to guarantee stability of solutions in the analysis.

\paragraph{The Projected Gradient update}
The projected gradient update moves in the opposite gradient direction to $\bar y:=x-\lambda g$ for some step-length $\lambda>0$, and obtains the output $y=\Update(x)$ as the projection $y$ of $\bar y$ to the box $\B(u)$.
This projection simply changes every negative coordinate to $0$ and every $\bar y(i)>u(i)$ to $y(i)=u(i)$.
To ensure \eqref{prop:update-2}, we can perform an additional step that replaces $y$ by the point $y'\in[x,y]$ that minimizes $\tfrac{1}{2}\|Ay'-b\|^2$.

Consider now an NNLS instance (i.e., $u(i)=\infty$ for all $i\in N$), and let $x$ be a stable point. Recall $\Iu{x}=\emptyset$ in the NNLS setting. Lemma~\ref{lem:grad-x} allows us to write the projected gradient update in the following simple form that also enables to use optimal line search.
Define 
\begin{equation}\label{eq:z-PG}
z^x(i):=\max\{-g^x(i),0\} ,
\end{equation}
and use $z=z^x$ when clear from the context. According to Lemma~\ref{lem:optimal}, $x$ is optimal to \eqref{eq:min-norm-1} if and only if $z=\bf0$. 
We use the optimal line search
\[
y:=\arg\min_y\left\{\tfrac{1}{2}\|Ay-b\|^2\mid  y=x+\lambda z, \lambda\ge 0\right\}\, .
\]
If $z\neq\bf0$, this can be written explicitly as
\begin{equation}\label{eq:optline-update-PG}
y:=x+\frac{\|z\|^2}{\|Az\|^2}z\, .
\end{equation}
To verify this formula, we note that $\|z\|^2=-\pr{g}{z}$, since for every $i\in N$ either $z(i)=0$ or $z(i)=-g(i)$.

\paragraph{Coordinate update}
Our third update rule is the one used in the Lawson--Hanson algorithm.
 Given a stable point $x\in\B(u)$, we select a coordinate $j\in N$ where either $j\in \In{x}$ and $g(j)<0$ or $j\in \Iu{x}$ and $g(j)>0$, and set $y$ such that $y(i)=x(i)$ if $i\neq j$, and $y(j)$ is chosen in $[0,u(j)]$ so that $\tfrac{1}{2}\|Ay-b\|^2$ is minimized.
As in the Lawson--Hanson algorithm, we can maintain basic solutions throughout.
\begin{lemma}\label{lem:wolfe-indep}
Assume $A^J$ is linearly independent for $J=\J{x}$. Then, 
$A^{J'}$ is also linearly independent for
$J'=\J{y}=J\cup\{j\}$, where $y=\Update(x)$.
\end{lemma}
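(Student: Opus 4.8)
The plan is to establish the two assertions separately: first the set identity $\J{y}=J\cup\{j\}$, and then that the enlarged column set $A^{J\cup\{j\}}$ is linearly independent. The identity is a routine one-dimensional computation, while the independence is the structural heart of the statement and follows cleanly from the stability of $x$.

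First I would record that, since $y(i)=x(i)$ for every $i\neq j$ and $j\in \In{x}\cup \Iu{x}$ (so $j\notin J$), none of the other coordinates changes its status: for $i\neq j$ we have $i\in\J{y}$ iff $i\in\J{x}$, hence $\J{y}\setminus\{j\}=J$. Thus $\J{y}=J\cup\{j\}$ is equivalent to $0<y(j)<u(j)$, i.e.\ to $j$ becoming passive. To verify this, parametrize the update by $t=y(j)$ and write $Ay-b=(Ax-b)+(t-x(j))A^j$, so that $\tfrac12\|Ay-b\|^2$ is a convex quadratic in $t$ with derivative $g(j)+(t-x(j))\|A^j\|^2$. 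Note that $A^j\neq\mathbf0$, since otherwise $g(j)=(A^j)^\top(Ax-b)=0$, contradicting the choice of $j$. At the starting bound the derivative equals $g(j)$: when $j\in\In{x}$ with $x(j)=0$ and $g(j)<0$, the quadratic is strictly decreasing at $t=0$, forcing $y(j)>0$; symmetrically, for $j\in\Iu{x}$ with $x(j)=u(j)$ and $g(j)>0$ it is strictly increasing at $t=u(j)$, forcing $y(j)<u(j)$. In the NNLS regime $u(j)=\infty$ the opposite bound is vacuous, so $y(j)=-g(j)/\|A^j\|^2\in(0,\infty)$ lands strictly interior and $j\in\J{y}$.

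For the linear independence I would argue by contradiction. Suppose $A^j\in\operatorname{span}\{A^i\mid i\in J\}$, say $A^j=\sum_{i\in J}c_i A^i$. Since $x$ is a stable point, Lemma~\ref{lem:grad-x} gives $g^x(i)=(A^i)^\top(Ax-b)=0$ for every $i\in J$. Taking the inner product of the dependence relation with $Ax-b$ then yields
\[
g(j)=(A^j)^\top(Ax-b)=\sum_{i\in J}c_i\,(A^i)^\top(Ax-b)=0,
\]
contradicting that $j$ was chosen with $g(j)\neq 0$. Hence $A^j$ does not lie in the span of the (independent) columns indexed by $J$, so $A^{J\cup\{j\}}=A^{J'}$ is linearly independent.

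I expect the only genuine subtlety to be the interiority $y(j)\in(0,u(j))$ in the general capacitated case: there the one-dimensional minimizer $t^\ast=x(j)-g(j)/\|A^j\|^2$ could in principle overshoot to the \emph{opposite} bound, in which case $j$ would re-attach to a boundary and $\J{y}$ would equal $J$ rather than $J\cup\{j\}$. In the Lawson--Hanson/NNLS setting that this update rule models, $u(j)=\infty$ removes this obstacle automatically, so the statement holds as written; the independence argument above is insensitive to this issue and is the robust part of the proof.
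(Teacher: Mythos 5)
Your independence argument is exactly the paper's proof: assume $A^j=A^Jw$, pair the relation with $Ax-b$, and use stability of $x$ (so $(A^J)^\top(Ax-b)=\mathbf{0}$) to force $g(j)=0$, contradicting the choice of $j$. Your additional verification that $\J{y}=J\cup\{j\}$ — which the paper's proof silently assumes — and your caveat that in the capacitated case the one-dimensional minimizer could land on the opposite bound $u(j)$, so that $j$ rejoins the active set and $\J{y}=J$ (leaving the independence conclusion intact but breaking the stated set identity), are both correct and make your write-up more careful than the original.
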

\begin{proof}
For a contradiction, assume $A^j=A^Jw$ for some $w\in\R^J$. Then, 
\[
g(j)=(A^j)^\top (Ax-b)=w^\top (A^J)^\top (Ax-b)=0\, ,
\]
a contradiction.
\end{proof}

Let us start with $x=\bf0$, 
i.e., $\J{x}=\Iu{x}=\emptyset$, $\In{x}=N$. Then, $A^{\J{x}}$ remains linearly independent throughout. Hence, every stable solution $x$ is a basic solution to \eqref{eq:min-norm-1}. Note that whenever $A^{\J{x}}$ is linearly independent, $\cC(\In{x},\Iu{x})$ contains a single point, hence, $\ctr(x)$ is uniquely defined.

\medskip

Consider now the NNLS setting.
For $z$ as in \eqref{eq:z-PG}, let us return $y=x$ if $z=\bf0$. 
Otherwise, let $j\in\arg\max_k z(k)$; 
note that $j\in\In{x}$. Let 
\begin{equation}\label{eq:optline-update-W}
y(i):=\begin{cases}
x(i)&\mbox{if }i\in N\setminus\{j\}\, ,\\
\frac{z(i)}{\|A^i\|^2}&\mbox{if }i=j\, .
\end{cases}
\end{equation}

\medskip

The following lemma is immediate. In the NNLS setting, \eqref{prop:update-2} is guaranteed for  the updates  described above. For the general form with upper bounds, we can post-process as noted above to ensure \eqref{prop:update-2}.
\begin{lemma}
The Frank--Wolfe, projected gradient, and coordinate update rules all satisfy \eqref{prop:update-1} and \eqref{prop:update-2}.
\end{lemma}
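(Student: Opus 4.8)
The plan is to verify the two requirements \eqref{prop:update-1} and \eqref{prop:update-2} for each of the three update rules, treating each rule in turn. The core observation throughout is that each rule selects a \emph{descent direction} $z$ with $\dotp{g}{z}<0$ whenever $x$ is non-optimal, and then performs exact line search (possibly constrained to a line segment) along that direction. For a direction $z$, the objective along the ray is the one-dimensional quadratic $\phi(\lambda):=\tfrac12\|A(x+\lambda z)-b\|^2=\tfrac12\|Ax-b\|^2+\lambda\dotp{g}{z}+\tfrac{\lambda^2}{2}\|Az\|^2$, which is strictly convex in $\lambda$ as long as $Az\neq\mathbf 0$. Both required properties will follow from elementary facts about minimizing such a quadratic.

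First I would handle \eqref{prop:update-1}. For the Frank--Wolfe update, $y=x$ is returned exactly when $\dotp{g}{\bar y}=\dotp{g}{x}$; since $\bar y$ minimizes the linear form $\dotp{g}{\cdot}$ over $\B(u)$, this equality says $\dotp{g}{y-x}\ge 0$ for all $y\in\B(u)$, which is precisely the first-order optimality condition of Lemma~\ref{lem:optimal}. When $x$ is non-optimal, $\dotp{g}{\bar y-x}<0$, so $\phi$ is strictly decreasing at $\lambda=0$ and the line-segment minimizer gives strictly smaller objective. For the projected gradient update I would use the direction $z$ from \eqref{eq:z-PG} in the NNLS case, noting $z=\mathbf 0$ iff $x$ is optimal by Lemma~\ref{lem:optimal}, and that $\dotp{g}{z}=-\|z\|^2<0$ otherwise; the explicit minimizer \eqref{eq:optline-update-PG} then strictly decreases $\phi$. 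In the capacitated case the projection plus the post-processing line search onto $[x,y]$ gives the same conclusion via strict convexity. For the coordinate update, the chosen $j$ satisfies $\dotp{g}{e_j\,\mathrm{sign}}<0$ for the admissible single-coordinate move, so the constrained one-dimensional minimizer strictly improves unless no such $j$ exists, which again is exactly the optimality condition.

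Next I would establish \eqref{prop:update-2}, which asserts that the \emph{endpoint} $y$ strictly beats every proper convex combination $z=\lambda y+(1-\lambda)x$ with $\lambda\in[0,1)$. This is where I expect the only real subtlety. The clean way is to observe that each update performs exact minimization of the strictly convex quadratic $\phi$ over a segment (or ray) on which $y$ is the minimizer. Strict convexity of $\phi$ (guaranteed once $Az\neq\mathbf 0$, which holds whenever $z\neq\mathbf 0$ is a genuine descent direction since $\dotp{g}{z}<0$ forces $Az\neq\mathbf 0$) immediately yields $\phi$ strictly decreasing up to its minimizer and strictly increasing beyond; hence every point strictly between $x$ and the minimizer $y$ has strictly larger value than $y$. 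The one point requiring care is the Frank--Wolfe and projected-gradient cases where the unconstrained minimizer may lie outside the feasible segment: there $y$ is taken as the \emph{endpoint} of $[x,\bar y]$ and $\phi$ is still strictly monotone on $[x,y]$, so \eqref{prop:update-2} persists. For the NNLS forms where the minimizer is interior and given explicitly, the statement is direct.

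The main obstacle, then, is purely the bookkeeping of strict convexity when the line search is constrained by the box: I must confirm in each rule that $Az\neq\mathbf 0$ so that $\phi$ is a genuinely strictly convex quadratic rather than linear. I would dispatch this by noting that a descent direction with $\dotp{g}{z}=\dotp{A^\top(Ax-b)}{z}=\dotp{Ax-b}{Az}<0$ cannot have $Az=\mathbf 0$, which covers all three rules uniformly and makes both \eqref{prop:update-1} and \eqref{prop:update-2} fall out of the same strict-convexity argument.
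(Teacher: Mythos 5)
Your proof is correct, and it fills in exactly the reasoning the paper leaves implicit (the paper simply declares the lemma ``immediate'' after noting that the general capacitated case needs the post-processing line search). The key points you make --- that each rule moves along a direction $z$ with $\dotp{g}{z}<0$ unless $x$ is optimal, that this forces $Az\neq\mathbf{0}$ and hence strict convexity of the one-dimensional quadratic, and that exact line search over a segment then yields both \eqref{prop:update-1} and \eqref{prop:update-2} --- are precisely the intended argument.
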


\paragraph{Cycle-free update rules}
\begin{definition} We say that $\Update(x)$ is a \emph{cycle-free update rule}, if for every $x\in\B(u)$ and $y=\Update(x)$, 
 $x-y$ is cycle-free w.r.t.~$A$. 
\end{definition}

\begin{lemma}\label{lem:solid-update}
The Frank--Wolfe, projected gradient, and coordinate updates are all cycle-free.
\end{lemma}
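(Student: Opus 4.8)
The unifying idea is that in all three cases the displacement $x-y$ \emph{conforms to} the gradient $g=g^x$, and that this single property already forces cycle-freeness. So the plan is to first prove a general principle and then verify conformity case by case.

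The principle I would isolate is: \emph{if $x-y$ conforms to $g=g^x$ (i.e.\ $g(i)(x-y)(i)>0$ whenever $(x-y)(i)\neq 0$), then $x-y$ is cycle-free w.r.t.\ $A$.} To prove it, suppose for contradiction that some generalized path-circuit decomposition of $v:=x-y$ contains an inner vector $h$, so $Ah=\mathbf 0$. By the definition of the decomposition, $(h,Ah)$ conforms to $(v,Av)$, so on the $N$-coordinates $h$ conforms to $v$; composing with the assumption that $v$ conforms to $g$, the signs of $h$, $v$ and $g$ all agree on $\supp(h)$. Hence $h$ conforms to $g$, giving $\dotp{g}{h}=\sum_{i\in\supp(h)}g(i)h(i)>0$ since $h\neq\mathbf 0$. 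On the other hand, $\dotp{g}{h}=\dotp{A^\top(Ax-b)}{h}=\dotp{Ax-b}{Ah}=0$, a contradiction. A useful companion observation is that conformity to $g$ is preserved under multiplication by a nonnegative scalar; since each update ends with an optimal line search that replaces the raw candidate $\bar y$ (resp.\ the projection) by a point $x+\mu(\bar y-x)$ with $\mu\in[0,1]$, the final displacement is a nonnegative multiple of the raw one, so it suffices to check conformity for the raw direction (and $\mu=0$ gives the trivially cycle-free $\mathbf 0$).

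Next I would verify conformity coordinatewise. For Frank--Wolfe with $\bar y$ as in \eqref{eq:bar-y}: if $g(i)>0$ then $\bar y(i)=0$ and $(x-\bar y)(i)=x(i)\ge 0$; if $g(i)<0$ then $\bar y(i)=u(i)$ and $(x-\bar y)(i)=x(i)-u(i)\le 0$; if $g(i)=0$ the convention $\bar y(i)=x(i)$ gives $(x-\bar y)(i)=0$. In every case the product $(x-\bar y)(i)g(i)$ is nonnegative, and strictly positive exactly when the coordinate is nonzero, i.e.\ $x-\bar y$ conforms to $g$. The projected gradient step $y=\Pi_{\B(u)}(x-\lambda g)$ obeys the same trichotomy: a coordinate clipped to $0$ has $g(i)>0$ and $(x-y)(i)=x(i)\ge 0$, one clipped to $u(i)$ has $g(i)<0$ and $(x-y)(i)=x(i)-u(i)\le 0$, and an unclipped coordinate has $(x-y)(i)=\lambda g(i)$, of the sign of $g(i)$; in the NNLS form \eqref{eq:optline-update-PG}, $x-y$ is simply a negative multiple of $z^x$, which conforms to $g$ for the same reason. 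The coordinate update I would instead dispatch directly: there $x-y$ is supported on the single chosen index $j$, and $j$ is selected with $g(j)\neq 0$, forcing $A^j\neq\mathbf 0$; any inner vector $h$ in a conformal decomposition of $x-y$ has $\supp(h)\subseteq\{j\}$, so $h=c\,e_j$ and $Ah=cA^j=\mathbf 0$ forces $c=0$, so no inner vector exists.

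The only delicate point—and the step I expect to require the most care to phrase cleanly—is the sign bookkeeping on the coordinates with $g(i)=0$ in the Frank--Wolfe (and likewise the unclipped $g(i)=0$) case: it is precisely the nonstandard convention $\bar y(i)=x(i)$ that zeroes out these otherwise sign-ambiguous coordinates and keeps $x-\bar y$ conforming to $g$. Once this is settled and combined with the nonnegative-scaling reduction for the optimal line search, the remaining verifications are a routine trichotomy, and the general principle above finishes each case.
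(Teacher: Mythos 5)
Your proposal is correct and follows essentially the same route as the paper: the key step in both is that the displacement conforms to the (negative) gradient, so an inner vector $h$ in a conformal decomposition would satisfy $\dotp{g}{h}\neq 0$ while $\dotp{g}{h}=\dotp{Ax-b}{Ah}=0$. Your coordinatewise verification of conformity and the nonnegative-scaling remark for the line search merely spell out details the paper leaves implicit.
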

\begin{proof}
Each of the three rules satisfy that  
for any $x\in\B(u)$ with gradient $g$ and $y=\Update(x)$, 
$y-x$ conforms to $-g$. We show that this implies the required property.

For a contradiction, assume that a generalized path-cycle decomposition of $y-x$ contains an inner vector $h$. 
Thus, $h\neq \bf0$, $Ah=\bf0$, 
and $h$ conforms to $-g$. Consequently, $\dotp{g}{h}< 0$.
Recalling the form of $g$ from \eqref{eq:gradient}, we get
\[
0>\dotp{g}{h}=\dotp{A^\top (Ax-b)}{h}=\dotp{Ax-b}{Ah}=0\, ,\
\]
a contradiction.
\end{proof}

\section{Analysis}
Our main goal is to show the following convergence bound. The proof will be given in Section~\ref{sec:overall}.  Recall that in an NNLS instance, all upper capacities are infinite. 
\begin{theorem}\label{thm:conv-main}
Consider an NNLS instance of \eqref{eq:min-norm-1}, and assume we use a cycle-free centroid mapping.
Algorithm~\ref{alg:mnp2} terminates with an optimal solution in $O(n\cdot m^2\cdot\kappa^2(\cX_A)\cdot\|A\|^2\cdot\log(n+\kappa(\cX_A)))$ major cycles using projected gradient updates \eqref{eq:optline-update-PG}, and in  $O(n^{2}m^2\cdot\kappa^2(\cX_A)\cdot\|A\|^2\cdot\log(n+\kappa(\cX_A)))$ major cycles using coordinate updates \eqref{eq:optline-update-PG}, when initialized with $x=\mathbf{0}$.
In both cases, the total number of minor cycles 
is  $O(n^{2}m^2\cdot\kappa^2(\cX_A)\cdot\|A\|^2\cdot\log(n+\kappa(\cX_A)))$.
\end{theorem}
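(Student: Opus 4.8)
The plan is to combine the two ingredients flagged in the overview. The first is the geometric-progress guarantee: by Theorem~\ref{thm:geom-progress} the optimality gap contracts by a fixed multiplicative factor in each major cycle, $\eta(x_{k+1})\le (1-1/\Gamma)\,\eta(x_k)$, where $x_k$ is the stable iterate ending major cycle $k$. (Monotonicity of $\eta$ along the run is immediate: property~\eqref{prop:update-1} makes the update strictly decrease $\|Ax-b\|$, and moving toward a centroid set can only decrease $\|Ax-b\|$ in every minor cycle.) Applied to the two update rules, this gives $\Gamma=O(m^2\kappa^2(\cX_A)\,\|A\|^2)$ for projected-gradient steps and a factor $n$ larger, $\Gamma=O(n\,m^2\kappa^2(\cX_A)\,\|A\|^2)$, for coordinate steps, the extra $n$ reflecting that a single coordinate move captures only a $1/n$ fraction of the available first-order decrease.

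The second and main ingredient is a proximity-driven support-identification argument. First I would record the basic estimate: since $b^*=Ax^*$ is the Euclidean projection of $b$ onto the convex set $Z(A,u)$ and $Ax\in Z(A,u)$, the Pythagorean inequality gives $\|Ax-b\|^2\ge \|Ax-b^*\|^2+\|b^*-b\|^2$, hence $\|Ax-b^*\|_2\le \sqrt{2\eta(x)}$. Feeding this into Lemma~\ref{lem:opt-prox} yields, for every $x\in\B(u)$, an optimal $x^*$ with $\|x-x^*\|_\infty\le \kappa(\cX_A)\sqrt{2m\,\eta(x)}$ and $\|x-x^*\|_2\le m\,\kappa(\cX_A)\sqrt{2\eta(x)}$. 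Thus the proximity radius $\rho(x):=\kappa(\cX_A)\sqrt{2m\,\eta(x)}$ bounds the $\ell_\infty$-distance of any iterate to the optimal face $F^*:=\{y\ge \mathbf 0\mid Ay=b^*\}$, and it shrinks geometrically along the run.

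The core claim is then that after every $O(\Gamma\log(n+\kappa(\cX_A)))$ major cycles one more coordinate becomes permanently positive. At a stable iterate $x_k$, the proximity bound forces $x_k$ to nearly agree with a nearest optimal $x^*_k$; among the not-yet-revealed coordinates I would pick one of largest value in $x^*_k$, which by an averaging argument exceeds $\rho(x_k)$ once the gap has dropped enough. To limit the number of such reveal events to $n$ and the gap-decrease per event to a $\mathrm{poly}(n,\kappa(\cX_A))$ factor (so that $\log(n+\kappa(\cX_A))$ iterations per event suffice), I would separate the magnitudes of the optimal coordinates into geometric scales and use the circuit-imbalance measure to bound the ratio between consecutive relevant scales. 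The delicate point, and the step I expect to be the main obstacle, is to argue that a coordinate once large relative to $\rho(x_k)$ never returns to $0$ in any later iterate: because $F^*$ may contain many solutions, one cannot simply compare a later $x'$ to an arbitrary nearest optimal, so the argument must instead control the motion of the iterates themselves, establishing a bound of the promised form $\|x-x'\|_\infty\le O(\mathrm{poly}(n,\kappa(\cX_A)))\cdot \sqrt{\eta(x)}$ for every subsequent iterate $x'$. This is exactly where the cycle-freeness of both the update rule (Lemma~\ref{lem:solid-update}) and the centroid mapping is used: each individual displacement is cycle-free and hence controlled through Lemma~\ref{lem:contig-proximity}, which prevents the locked coordinate from drifting down to zero.

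Finally I would assemble the counts. At most $n$ coordinates can be revealed and each reveal costs $O(\Gamma\log(n+\kappa(\cX_A)))$ major cycles, so the total is $O(n\,\Gamma\log(n+\kappa(\cX_A)))$, namely $O(n\,m^2\kappa^2(\cX_A)\,\|A\|^2\log(n+\kappa(\cX_A)))$ for projected-gradient and $O(n^2 m^2\kappa^2(\cX_A)\,\|A\|^2\log(n+\kappa(\cX_A)))$ for coordinate updates; once all relevant coordinates are locked in, the stable point satisfies the conditions of Lemma~\ref{lem:optimal} and the algorithm halts (finiteness being guaranteed by Theorem~\ref{thm:finite}). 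For minor cycles, the projected-gradient bound follows from the $\le n$ minor cycles per major cycle (Theorem~\ref{thm:finite}). For coordinate updates a sharper, direct accounting applies: starting from $x=\mathbf 0$ we have $\J{x}=\emptyset$; every update enlarges $\J{\cdot}$ by exactly one index (Lemma~\ref{lem:wolfe-indep}) while every minor cycle removes at least one index from it, and since $|\J{\cdot}|\ge 0$ throughout, the total number of minor cycles is at most the total number of major cycles, giving $O(n^2 m^2\kappa^2(\cX_A)\,\|A\|^2\log(n+\kappa(\cX_A)))$ in both cases.
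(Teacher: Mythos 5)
Your high-level skeleton matches the paper's: geometric contraction of the gap (Theorem~\ref{thm:geom-progress}), an iterate-to-iterate proximity bound of the form $\|x-x'\|\le \mathrm{poly}(n,\kappa)\sqrt{\gap(x)}$, and $n$ ``reveal'' events each costing $O(\Gamma\log(n+\kappa))$ cycles. But the central step --- why, after the gap drops by a fixed $\mathrm{poly}(n,\kappa,\|A\|)$ factor, some coordinate of $\In{x}$ must become and remain large --- is replaced by a mechanism that does not work. You propose to pick the largest not-yet-revealed coordinate of a nearest optimal $x^*_k$ and argue ``by averaging'' that it exceeds $\rho(x_k)$ once the gap is small. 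This is in tension with your own proximity bound: Lemma~\ref{lem:opt-prox} gives $x^*_k(i)\le \|x_k-x^*_k\|_\infty\le\rho(x_k)$ for every $i\in\In{x_k}$, i.e.\ the optimal coordinates on the unrevealed set are \emph{at most} $\rho(x_k)$, not at least. Moreover the magnitudes of optimal coordinates are fixed data of the instance, not controlled by $\mathrm{poly}(n,\kappa)$, so ``once the gap has dropped enough'' has no uniform meaning; the ``geometric scales'' repair is not developed and the ratio of consecutive optimal coordinate magnitudes is not bounded by $\kappa$. The paper avoids reasoning about $x^*$ entirely here: it takes the later iterate $x'$ with $\sqrt{\gap(x')}<\varepsilon=\sqrt{\gap(x)}/(4n\Theta\|A\|)$, truncates it to $\hat x$ by zeroing the coordinates in $\In{x}$, uses the orthogonality identity $\|A\hat x-b\|^2=\|Ax'-b\|^2+\|A\hat x-Ax'\|^2$ (Lemma~\ref{lem:x-hat-x}, valid because $x'$ is stable), and then uses that the stable point $x$ \emph{minimizes} $\|Ay-b\|$ over $\bL(\In{x},\emptyset)\ni\hat x$ to force $2\gap(x)\le(n\Theta^2\|A\|^2+2)\varepsilon^2$, a contradiction. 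That argument, not a lower bound on optimal coordinates, is what produces the threshold and the $\log(n+\kappa)$ factor.

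Second, you correctly identify the needed proximity bound $\|x-x'\|\le\Theta\sqrt{\gap(x)}$ for \emph{all} later iterates $x'$ and that cycle-freeness plus Lemma~\ref{lem:contig-proximity} is the tool, but you do not derive it, and the derivation is not immediate: per step one only gets $\|x^{(k+1)}-x^{(k)}\|^2\le m^2\kappa^2(\cX_A)\bigl(\|Ax^{(k)}-b\|^2-\|Ax^{(k+1)}-b\|^2\bigr)$ (Lemmas~\ref{lem:centr-move} and~\ref{lem:update-move}), and summing the square roots over an unbounded number of future steps does not obviously stay $O(\sqrt{\gap(x)})$. The paper combines Cauchy--Schwarz over blocks of $T=O(nm^2\kappa^2\|A\|^2)$ steps (Lemma~\ref{lem:t-step-convergence}, giving a $\sqrt{T}$ factor) with the geometric decay of the gap across blocks to sum a geometric series (Lemma~\ref{lem:strong-prox}); this is exactly where the extra $\sqrt{n}\cdot m\cdot\|A\|$ in $\Theta$ comes from, and without it the constant in your threshold is unjustified. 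Your final accounting of major and minor cycles (including the sharper count for coordinate updates via $|\J{\cdot}|$) is consistent with the paper.
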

\subsection{Proximity bounds}\label{sec:prox}
We show that if using a cycle-free update rule and a cycle-free centroid mapping, the movement of the iterates in Algorithm~\ref{alg:mnp2} can be bounded by the change in the objective value.
First, a nice property of the centroid set  is that the movement of $Ax$ directly relates to the decrease in the objective value. Namely,
\begin{lemma}\label{lem:centr-move}
For $x\in \B(u)$, let $y\in\cC(\In{x},\Iu{x})$. Then, 
\[
\|Ax-Ay\|^2=\|Ax-b\|^2-\|Ay-b\|^2\, .
\]
Consequently, if $\Psi$ is a cycle-free centroid mapping and $y=\Psi(x)$, then 
\[
\|x-y\|^2\le m^2\cdot\kappa^2(\cX_A)\cdot\left(\|Ax-b\|^2-\|Ay-b\|^2\right)\, .
\]
\end{lemma}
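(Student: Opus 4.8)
The plan is to prove the two statements separately, starting with the identity and then using it together with the cycle-free property of the centroid mapping to derive the norm bound.

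For the first identity, I would exploit the defining optimality property of the centroid set. Since $y\in\cC(\In{x},\Iu{x})$, Proposition~\ref{prop:centr} tells us that $(A^{J(x)})^\top(Ay-b)=\mathbf{0}$, where $J=\J{x}$. The key geometric observation is that $x$ and $y$ agree on the coordinates in $\In{x}\cup\Iu{x}$ (since $x\in\bL(\In{x},\Iu{x})$ as well), so the difference $x-y$ is supported on $J$. Therefore $A(x-y)=A^J(x-y)_J$, and I can compute
\[
\dotp{Ax-b}{Ay-Ax}=\dotp{Ax-b}{A^J(y-x)_J}=\dotp{(A^J)^\top(Ax-b)}{(y-x)_J}.
\]
I would actually want to pair with $Ay-b$ rather than $Ax-b$; using $(A^J)^\top(Ay-b)=\mathbf{0}$ gives $\dotp{Ay-b}{Ax-Ay}=0$, i.e. $Ax-Ay$ is orthogonal to $Ay-b$. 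This is just the Pythagorean statement that $Ay$ is the orthogonal projection of $b$ onto the affine image $\{Az\mid z\in\bL(\In{x},\Iu{x})\}$, with $Ax$ a point in that affine set. The identity $\|Ax-b\|^2=\|Ax-Ay\|^2+\|Ay-b\|^2$ then follows immediately by expanding $\|Ax-b\|^2=\|(Ax-Ay)+(Ay-b)\|^2$ and dropping the vanishing cross term, which rearranges to exactly the claimed equation.

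For the second statement, I would combine this identity with Lemma~\ref{lem:contig-proximity}. Since $\Psi$ is a cycle-free centroid mapping and $y=\Psi(x)$, by definition the vector $x-y$ (equivalently $\Psi(x)-x$) is cycle-free with respect to $A$. Lemma~\ref{lem:contig-proximity} then gives $\|x-y\|_2\le m\cdot\kappa(\cX_A)\cdot\|A(x-y)\|_2=m\cdot\kappa(\cX_A)\cdot\|Ax-Ay\|_2$. Squaring and substituting the first identity $\|Ax-Ay\|^2=\|Ax-b\|^2-\|Ay-b\|^2$ yields
\[
\|x-y\|^2\le m^2\cdot\kappa^2(\cX_A)\cdot\left(\|Ax-b\|^2-\|Ay-b\|^2\right),
\]
which is the desired bound.

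I do not anticipate a serious obstacle here; the proof is essentially assembling two already-established facts. The one point requiring care is the orthogonality argument: I must make sure to use the correct first-order optimality characterization of $y\in\cC(\In{x},\Iu{x})$ (namely $(A^J)^\top(Ay-b)=\mathbf{0}$ from Proposition~\ref{prop:centr}) and to correctly observe that $x-y$ is supported on $J$, so that the projection $(A^J)^\top$ captures the full inner product $\dotp{Ay-b}{A(x-y)}$. Once that orthogonality is in hand, both the Pythagorean identity and the final norm inequality are routine.
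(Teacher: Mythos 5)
Your proof is correct and follows essentially the same route as the paper: both establish the orthogonality $\dotp{Ax-Ay}{Ay-b}=0$ via $(A^{J})^\top(Ay-b)=\mathbf{0}$ from Proposition~\ref{prop:centr} together with the fact that $x-y$ is supported on $J(x)$, then conclude by the Pythagorean expansion and apply Lemma~\ref{lem:contig-proximity} to the cycle-free vector $\Psi(x)-x$ for the second bound. No gaps.
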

\begin{proof}
Let 
$J:=\J{x}$. 
Since $Ax-b=(Ax-Ay)+(Ay-b)$, the claim is equivalent to showing that
\[
\dotp{Ax-Ay}{Ay-b}=0\, .
\]
Noting that $Ax-Ay=A^Jx_J-A^J y_J$, we can write
\[
\dotp{Ax-Ay}{Ay-b}=(x_J-y_J)^\top (A^J)^\top (Ay-b)=0\, .
\]
Where the equality follows since 
$(A^J)^\top ( Ay-b)=\bf0$ by Proposition~\ref{prop:centr}. 
The second part follows from Lemma~\ref{lem:contig-proximity}.
\end{proof}

Next, let us consider the movement of $x$ during a call to $\Update(x)$. 

\begin{lemma}\label{lem:update-move}
Let $x\in\B(u)$ and $y=\Update(x)$. Then,
\[
\|Ax-Ay\|^2\le \|Ax-b\|^2-\|Ay-b\|^2\, .
\]
If using a cycle-free update rule,  we also have 
\[
\|x-y\|^2\le m^2\cdot \kappa^2(\cX_A)\cdot \left(\|Ax-b\|^2-\|Ay-b\|^2\right)\, .
\]
\end{lemma}
\begin{proof}
From property \eqref{prop:update-2}, it is immediate to see that 
$\dotp{Ay-b}{Ax-Ay}\ge 0$.
This implies the first claim. The second claim 
follows from the definition 
of a cycle-free update rule and Lemma~\ref{lem:contig-proximity}.
\end{proof}

\begin{lemma}\label{lem:t-step-convergence}
Let $x\in\B(u)$, and let $x'$ be an iterate obtained by 
consecutive $t$ major or minor updates 
of Algorithm~\ref{alg:mnp2} using a cycle-free update rule and 
a cycle-free centroid mapping, 
starting from $x$. Then,
\[
\|x-x'\|\le m\cdot\kappa(\cX_A)\cdot\sqrt{2t}\cdot\sqrt{\tfrac{1}{2}\|Ax-b\|^2-\tfrac{1}{2}\|Ax'-b\|^2}\, .
\]
\end{lemma}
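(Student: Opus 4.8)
The plan is to write the iterates produced by the $t$ steps as $x=x_0,x_1,\dots,x_t=x'$, where each transition $x_{k-1}\to x_k$ is either a major-cycle update $x_k=\Update(x_{k-1})$ or a minor-cycle step $x_k=x_{k-1}+\alpha^*(w-x_{k-1})$ with $w=\Psi(x_{k-1})$ and $\alpha^*\in[0,1]$. The goal is a uniform per-step bound
\[
\|x_{k-1}-x_k\|^2\le m^2\kappa^2(\cX_A)\left(\|Ax_{k-1}-b\|^2-\|Ax_k-b\|^2\right)\, ,
\]
which I then telescope. For a major-cycle step this is precisely the second inequality of Lemma~\ref{lem:update-move}, since we use a cycle-free update rule.

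The work lies in the minor-cycle steps, because Lemma~\ref{lem:centr-move} only covers the full centroid move $\alpha^*=1$, whereas here $\alpha^*$ may be strictly smaller than $1$. I would handle this in two parts. First, I show the $A$-movement still lower-bounds the objective decrease. Writing $d:=A(x_{k-1}-w)$ and $r:=Aw-b$, the proof of Lemma~\ref{lem:centr-move} gives the orthogonality $\dotp{d}{r}=0$, so $Ax_{k-1}-b=d+r$ and $Ax_k-b=(1-\alpha^*)d+r$ are orthogonal decompositions. By the Pythagorean identity,
\[
\|Ax_{k-1}-b\|^2-\|Ax_k-b\|^2=\alpha^*(2-\alpha^*)\|d\|^2\ge (\alpha^*)^2\|d\|^2=\|A(x_{k-1}-x_k)\|^2\, ,
\]
where the inequality uses $\alpha^*\le 1$. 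Second, the displacement $x_k-x_{k-1}=\alpha^*(w-x_{k-1})$ is a nonnegative scalar multiple of $w-x_{k-1}$, which is cycle-free w.r.t.\ $A$ because $\Psi$ is a cycle-free centroid mapping; since scaling an elementary vector by $\alpha^*>0$ keeps it elementary and preserves the inner/outer classification (as $A(\alpha^* h)=\alpha^* Ah$ vanishes iff $Ah$ does), the displacement is cycle-free. The case $\alpha^*=0$ is a no-op for which the bound holds trivially. Lemma~\ref{lem:contig-proximity} then yields $\|x_k-x_{k-1}\|\le m\kappa(\cX_A)\,\|A(x_k-x_{k-1})\|$, and combining with the previous display gives the per-step bound for minor cycles as well.

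With the per-step bound established for all steps, I would conclude by the triangle inequality followed by Cauchy--Schwarz. Set $a_k:=\|Ax_k-b\|^2$; note $a_{k-1}\ge a_k$ so each increment $a_{k-1}-a_k\ge 0$. Then
\[
\|x-x'\|\le \sum_{k=1}^t\|x_{k-1}-x_k\|\le m\kappa(\cX_A)\sum_{k=1}^t\sqrt{a_{k-1}-a_k}\le m\kappa(\cX_A)\sqrt{t}\,\sqrt{\sum_{k=1}^t(a_{k-1}-a_k)}\, ,
\]
and the inner sum telescopes to $a_0-a_t=\|Ax-b\|^2-\|Ax'-b\|^2$. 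Rewriting this as $2\big(\tfrac12\|Ax-b\|^2-\tfrac12\|Ax'-b\|^2\big)$ absorbs the factor into the claimed $\sqrt{2t}$, completing the proof.

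The main obstacle is the partial minor-cycle step: since Lemma~\ref{lem:centr-move} is stated only for the full centroid move, I must separately verify both that the $A$-movement still controls the objective decrease when $\alpha^*<1$ and that cycle-freeness survives the scaling by $\alpha^*$. Once those two points are settled, the remainder is the standard triangle-inequality-plus-Cauchy--Schwarz telescoping.
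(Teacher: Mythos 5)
Your proof is correct and follows essentially the same route as the paper's: telescope the displacement over the $t$ steps via the triangle inequality and Cauchy--Schwarz, then apply the per-step bounds of Lemma~\ref{lem:centr-move} and Lemma~\ref{lem:update-move}. The one genuine addition is that you explicitly handle the truncated minor-cycle step with $\alpha^*<1$ (via the Pythagorean identity $\dotp{A(x_{k-1}-w)}{Aw-b}=0$ and the observation that cycle-freeness is preserved under positive scaling), a case the paper's proof invokes Lemma~\ref{lem:centr-move} for without comment; this is a worthwhile clarification rather than a different approach.
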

\begin{proof} Let us consider the (major and minor cycle) iterates $x=x^{(k)},x^{(k+1)},\ldots,x^{(k+t)}=x'$. From 
the triangle inequality, and the arithmetic-quadratic means inequality,
\[
\|x-x'\|\le \sum_{j=1}^{t} \|x^{(k+j)}-x^{(k+j-1)}\|\le \\\sqrt{t\sum_{j=1}^{t} \|x^{(k+j)}-x^{(k+j-1)}\|^2}
\]
The statement then follows using the bounds in
Lemma~\ref{lem:centr-move} and Lemma~\ref{lem:update-move}.
\end{proof}

\subsection{Geometric convergence of the projected gradient  and coordinate updates}
We present a simple convergence analysis for the NNLS setting.
For the general capacitated setting, similar bounds should follow from \cite{Necoara2019}. Recall that $\gap(x)$ denotes the optimality gap at $x$.
\begin{theorem}\label{thm:geom-progress}
Consider an NNLS instance of \eqref{eq:min-norm-1},  and 
let $x\ge \bf0$ be a stable point. 
Then for $y=\Update(x)$ using the projected gradient update \eqref{eq:optline-update-PG}  we have
\[
\gap(y)\le \left(1-\frac{1}{2m^2\cdot\kappa^2({\cX_A})\cdot\|A\|^2}\right)\gap(x)\, .
\]
Using coordinate updates as in  \eqref{eq:optline-update-W},  we have
\[
\gap(y)\le \left(1-\frac{1}{2nm^2\cdot\kappa^2({\cX_A})\cdot\|A\|^2}\right)\gap(x)\, .
\]
Consequently,  either with projected gradient or with coordinate updates, after performing $O(nm^2\cdot\kappa^2({\cX_A})\cdot\|A\|^2)$ minor and major cycles from an iterate $x$, we obtain an iterate $x'$ with $\gap(x')\le \gap(x)/2$.
\end{theorem}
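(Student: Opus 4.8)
The plan is to establish the two per-step contraction inequalities, then obtain the "halving in $O(nm^2\kappa^2\|A\|^2)$ iterations" corollary by iterating. The core of the argument is a lower bound on the one-step objective decrease in terms of the current optimality gap, which I would prove by combining the explicit line-search formula with the proximity bound from Lemma~\ref{lem:opt-prox}.

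**First I would analyze the projected gradient update.** Let $x$ be a stable point, so by Lemma~\ref{lem:grad-x} we have $g^x(j)=0$ for all $j\in J(x)$; combined with $x(i)=0$ for $i\in I_0(x)$, this means the nonzero coordinates of $z=z^x$ (defined by $z(i)=\max\{-g(i),0\}$) sit exactly on $I_0(x)$. Using the explicit update \eqref{eq:optline-update-PG}, a direct computation gives the decrease
\[
\tfrac12\|Ax-b\|^2-\tfrac12\|Ay-b\|^2=\frac{\|z\|^4}{2\|Az\|^2}\ge \frac{\|z\|^2}{2\|A\|^2}\, ,
\]
where the last step uses $\|Az\|\le\|A\|\,\|z\|$. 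So it remains to lower-bound $\|z\|^2$ in terms of $\gap(x)$. Here is the key step: I would show $\langle z,x-x^*\rangle\ge \gap(x)$ for a suitably chosen optimal $x^*$. Indeed, $\langle g,x^*-x\rangle\ge \gap(x)$ follows from convexity (the gradient inequality $\tfrac12\|Ax^*-b\|^2\ge \tfrac12\|Ax-b\|^2+\langle g,x^*-x\rangle$), and one checks $\langle z,x-x^*\rangle\ge \langle -g,x-x^*\rangle=\langle g,x^*-x\rangle$ using $z(i)=\max\{-g(i),0\}$ together with $x(i)\ge 0=$ the lower bound and $x^*(i)\ge 0$ (the coordinates where $z(i)=0$ but $-g(i)>0$ cannot occur, and where $z(i)>-g(i)$ one has $-g(i)<0$, forcing the product to only help). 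Then Cauchy--Schwarz and Lemma~\ref{lem:opt-prox} give
\[
\gap(x)\le \langle z,x-x^*\rangle\le \|z\|\cdot\|x-x^*\|\le \|z\|\cdot m\,\kappa(\cX_A)\,\|Ax-b^*\|\, .
\]
Since $\tfrac12\|Ax-b^*\|^2=\gap(x)$, i.e.\ $\|Ax-b^*\|=\sqrt{2\gap(x)}$, this rearranges to $\|z\|^2\ge \gap(x)/(2m^2\kappa^2\gap(x))\cdot\gap(x)=\gap(x)/(m^2\kappa^2(\cX_A))$. Substituting into the decrease bound yields $\gap(x)-\gap(y)\ge \gap(x)/(2m^2\kappa^2\|A\|^2)$, which is exactly the claimed contraction.

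**For the coordinate update** I would repeat the argument but note that choosing $j\in\arg\max_k z(k)$ captures only a single coordinate of $z$, so $z(j)^2\ge \|z\|^2/n$. The single-coordinate decrease from \eqref{eq:optline-update-W} is $z(j)^2/(2\|A^j\|^2)\ge z(j)^2/(2\|A\|^2)\ge \|z\|^2/(2n\|A\|^2)$, and feeding in the same lower bound $\|z\|^2\ge \gap(x)/(m^2\kappa^2)$ produces the extra factor of $n$ in the denominator. Finally, the "halving" corollary follows by iterating: with contraction factor $1-1/C$ where $C=O(nm^2\kappa^2\|A\|^2)$, after $t$ steps $\gap$ multiplies by $(1-1/C)^t\le e^{-t/C}$, so $t=\lceil C\ln 2\rceil=O(nm^2\kappa^2\|A\|^2)$ steps suffice to halve the gap. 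I would also remark that minor cycles only decrease the objective (by Lemma~\ref{lem:centr-move}), so interleaving them does no harm.

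**The main obstacle** I anticipate is the inequality $\langle z,x-x^*\rangle\ge \gap(x)$, specifically verifying the coordinatewise comparison between $z=\max\{-g,0\}$ and $-g$ against the sign pattern of $x-x^*$. The clean cases are where $g(i)\le 0$ (so $z(i)=-g(i)$ and the terms agree); the delicate case is $g(i)>0$, where $z(i)=0$ drops a term that was $-g(i)(x(i)-x^*(i))=-g(i)x(i)+g(i)x^*(i)$. Since $x$ is stable, $g(i)>0$ can only happen for $i\in I_0(x)$ (where $x(i)=0$), so the dropped term is $g(i)x^*(i)\ge 0$, and dropping a nonnegative quantity only increases $\langle z,x-x^*\rangle$ relative to $\langle -g,x-x^*\rangle$. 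Pinning down this stability-driven sign bookkeeping is where I would spend the most care; everything else is routine algebra with the established proximity and operator-norm bounds.
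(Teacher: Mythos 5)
Your proposal follows essentially the same route as the paper's: the per-step decrease $\tfrac12\|Ax-b\|^2-\tfrac12\|Ay-b\|^2\ge\|z\|^2/(2\|A\|^2)$ (resp.\ $z(j)^2/(2\|A^j\|^2)$ with $z(j)^2\ge\|z\|^2/n$) from the optimal line search, then a lower bound on $\|z\|$ via the convexity inequality, a coordinatewise comparison of $z$ with $-g$ exploiting stability, Cauchy--Schwarz, and Lemma~\ref{lem:opt-prox}. Two points need repair. First, the signs in your key step are systematically inverted: from $f(x^*)\ge f(x)+\langle g,x^*-x\rangle$ one gets $\langle g,x^*-x\rangle\le-\gap(x)$, not $\ge\gap(x)$, and the correct chain is $\langle z,x^*-x\rangle\ge\langle -g,x^*-x\rangle\ge\gap(x)$. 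In the delicate case $g(i)>0$ (hence, by stability, $i\in\In{x}$ and $x(i)=0$), the term $-g(i)(x^*(i)-x(i))=-g(i)x^*(i)\le 0$ is replaced by $0$, which \emph{raises} $\langle z,x^*-x\rangle$ above $\langle-g,x^*-x\rangle$; your version with $x-x^*$ instead drops a nonnegative term and so moves in the wrong direction. Since Cauchy--Schwarz only uses the magnitude of the inner product, the final bound on $\|z\|$ survives, but several of your displayed inequalities are false as literally written.

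Second, and more substantively, you assert $\tfrac12\|Ax-b^*\|^2=\gap(x)$ without proof, and the equality is false in general: expanding around $b^*$ gives $\gap(x)=\tfrac12\|Ax-b^*\|^2+\langle Ax-b^*,\,b^*-b\rangle$, and the cross term equals $\langle x-x^*,g^{x^*}\rangle$, which the first-order optimality conditions at $x^*$ only force to be nonnegative (it is strictly positive, e.g., when $b^*$ lies on the boundary of the cone $\{Ax\mid x\ge\mathbf{0}\}$). So only $\|Ax-b^*\|\le\sqrt{2\gap(x)}$ holds; fortunately this is the direction needed to turn $\gap(x)\le\|z\|\cdot m\,\kappa(\cX_A)\,\|Ax-b^*\|$ into the desired bound on $\|z\|$, and it is exactly the inequality $2\gap(x)\ge\|Ax-b^*\|^2$ that the paper proves from $\langle x-x^*,g^{x^*}\rangle\ge0$ --- a step you should not skip. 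Finally, for the ``halving'' corollary with coordinate updates, the contraction is per \emph{major} cycle, so the stated bound on minor plus major cycles also needs the paper's accounting that the total number of minor cycles is at most the number of major cycles plus an additive term (each major cycle adds one index to $\J{x}$, each minor cycle removes at least one); your remark that interleaving minor cycles ``does no harm'' addresses monotonicity of the objective but not the cycle count.
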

Let us formulate the update progress using optimal line search.
\begin{lemma}\label{lem:update-progress-PG}
For a stable point $x\ge\bf0$, 
the update \eqref{eq:optline-update-PG} satisfies
\[
\|Ax-b\|^2-\|Ay-b\|^2\ge \frac{\|z\|^2}{\|A\|^2}\, ,
\]
and the update \eqref{eq:optline-update-W} satisfies
\[
\|Ax-b\|^2-\|Ay-b\|^2= \frac{z(j)^2}{\|A^j\|^2}\, .
\]
\end{lemma}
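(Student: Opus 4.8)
The plan is to prove Lemma~\ref{lem:update-progress-PG} by directly evaluating the objective decrease along the update direction and lower-bounding it. Both updates move $x$ along a ray $x+\lambda z$ (with $z=z^x$ for projected gradient, or the single-coordinate direction $e_j$ scaled appropriately for the coordinate update), so the exact decrease in $\tfrac12\|Ax-b\|^2$ along such a ray can be computed in closed form. The key identity I would use is that for any direction $d$ and the optimal step $\lambda^*=-\dotp{g}{d}/\|Ad\|^2$ (where $g=g^x=A^\top(Ax-b)$), the resulting decrease is exactly $\|Ax-b\|^2-\|A(x+\lambda^* d)-b\|^2 = \dotp{g}{d}^2/\|Ad\|^2$. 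This is the standard exact-line-search decrease for a quadratic objective and follows from expanding the square.

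For the projected gradient case, I would take $d=z$. First I would invoke the observation already recorded after \eqref{eq:optline-update-PG}, namely $\|z\|^2=-\dotp{g}{z}$, which holds because on each coordinate either $z(i)=0$ or $z(i)=-g(i)$; this uses that $x$ is a \emph{stable} point so that $g(j)=0$ for $j\in\J{x}$ by Lemma~\ref{lem:grad-x}, ensuring $z(i)=\max\{-g(i),0\}$ captures the full relevant gradient. Substituting into the exact-decrease identity gives
\[
\|Ax-b\|^2-\|Ay-b\|^2=\frac{\dotp{g}{z}^2}{\|Az\|^2}=\frac{\|z\|^4}{\|Az\|^2}\, .
\]
To finish, I would bound $\|Az\|^2\le \|A\|^2\|z\|^2$ using the definition of the spectral norm, which yields $\|Ax-b\|^2-\|Ay-b\|^2\ge \|z\|^4/(\|A\|^2\|z\|^2)=\|z\|^2/\|A\|^2$, as required.

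For the coordinate update, I would take $d=e_j$ with $j\in\arg\max_k z(k)$, so $j\in\In{x}$ and the step in \eqref{eq:optline-update-W} sets $y(j)=z(j)/\|A^j\|^2$ (note $Ae_j=A^j$). Here the computation is even cleaner since no projection onto the box is needed for the uncapacitated case and the optimal unconstrained step stays feasible: the exact-decrease identity with $d=e_j$ gives $\dotp{g}{e_j}^2/\|Ae_j\|^2 = g(j)^2/\|A^j\|^2 = z(j)^2/\|A^j\|^2$, using $z(j)=-g(j)$ since $j\in\In{x}$ attains the max of $z$ and hence has $g(j)<0$. This matches the claimed equality exactly.

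The only real subtlety — the step I would watch most carefully — is justifying the exact-decrease identity and, in the coordinate case, confirming that the closed-form step $z(j)/\|A^j\|^2$ is indeed the optimal (unconstrained) minimizer along $e_j$ so that the \emph{equality} (not merely inequality) holds. For the projected gradient case the inequality direction is what we need and the spectral-norm bound is the only lossy step. I expect these to be routine once the quadratic objective is expanded; the main care is in tracking signs and in using stability of $x$ (Lemma~\ref{lem:grad-x}) to relate $z$ to $g$ correctly.
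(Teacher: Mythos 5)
Your proposal is correct and follows essentially the same route as the paper: expand the quadratic along the ray, use $\dotp{g}{z}=-\|z\|^2$ (respectively $\dotp{g}{e_j}=-z(j)$) to get the exact decrease $\|z\|^4/\|Az\|^2$ (respectively $z(j)^2/\|A^j\|^2$), and bound $\|Az\|\le\|A\|\,\|z\|$ in the first case. The only cosmetic difference is that you attribute the identity $\|z\|^2=-\dotp{g}{z}$ to stability of $x$, whereas it already follows from the definition $z(i)=\max\{-g^x(i),0\}$ alone.
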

\begin{proof}
For the update \eqref{eq:optline-update-PG} with stepsize $\lambda=\|z\|^2/\|Az\|^2$,
we have
\[
\begin{aligned}
\|Ay-b\|^2&=\|Ax-b\|^2+\lambda^2\|Az\|^2+2\lambda\pr{Ax-b}{Az}\\
&=\|Ax-b\|^2+\lambda^2\|Az\|^2+2\lambda\pr{g}{z}\\
&=\|Ax-b\|^2+\lambda^2\|Az\|^2-2\lambda\|z\|^2\\
&=\|Ax-b\|^2-\frac{\|z\|^4}{\|Az\|^2}\, ,
\end{aligned}
\]
where the third equality uses $\pr{g}{z}=-\|z\|^2$ noted previously. The statement follows by using $\|Az\|\le \|A\|\cdot\|z\|$.

The proof is similar for the update \eqref{eq:optline-update-W}. Here, $y=x+\lambda e_j$, where $e_j$ is 
the $j$th unit vector, 
and $\lambda =z(j)/\|A^j\|^2$.  The bound follows by noting that $\pr{Ax-b}{Ae_j}=\pr{g}{e_j}=-z(j)$.
\end{proof}

We now use Lemma~\ref{lem:opt-prox} to bound $\|z\|$.
\begin{lemma}\label{lem:pg-z-bound}
For a stable point $x\ge \bf0$ and the update direction $z=z^x$ 
as in \eqref{eq:z-PG}, we have
\[
\|z\|\ge \frac{\sqrt{\gap(x)}}{\sqrt{2}m\cdot\kappa(\cX_A)}\, .
\]
\end{lemma}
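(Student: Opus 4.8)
The plan is to lower-bound $\|z\|$ by pairing a convexity estimate of the optimality gap with the proximity bound of Lemma~\ref{lem:opt-prox}. First I would fix an optimal solution $x^*$ and use convexity of the quadratic objective $f(x)=\tfrac12\|Ax-b\|^2$, whose gradient is $g^x$ as in \eqref{eq:gradient}. Convexity gives $f(x^*)\ge f(x)+\dotp{g^x}{x^*-x}$, and since $f(x^*)=p^*$ this rearranges to
\[
\gap(x)=f(x)-p^*\le \dotp{g^x}{x-x^*}\, .
\]

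Next I would exploit the structure of a stable point in the NNLS setting to simplify this inner product. Because $u(i)=\infty$ we have $\Iu{x}=\emptyset$, so $N=\In{x}\cup\J{x}$. By Lemma~\ref{lem:grad-x}, $g^x(j)=0$ for every $j\in\J{x}$, so those terms vanish; for $i\in\In{x}$ we have $x(i)=0$. Hence $\dotp{g^x}{x-x^*}=\sum_{i\in\In{x}}(-g^x(i))\,x^*(i)$. Using $x^*\ge\mathbf{0}$ together with $z(i)=\max\{-g^x(i),0\}\ge -g^x(i)$ from \eqref{eq:z-PG}, each summand obeys $(-g^x(i))x^*(i)\le z(i)x^*(i)$. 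Since $z(i)=0$ on $\J{x}$ and $x(i)=0$ on $\In{x}$, this collapses to $\gap(x)\le \dotp{z}{x^*-x}\le \|z\|\cdot\|x^*-x\|$ by Cauchy--Schwarz. The sign bookkeeping here is the crux of the argument: it is exactly where the NNLS hypothesis (so $\Iu{x}=\emptyset$) and stability (so $g^x$ vanishes on $\J{x}$) combine with $x^*\ge\mathbf{0}$ to orient the inequality correctly.

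To finish, I would choose $x^*$ to be the nearest optimal solution furnished by Lemma~\ref{lem:opt-prox}, so that $\|x-x^*\|\le m\cdot\kappa(\cX_A)\cdot\|Ax-b^*\|_2$, and then bound $\|Ax-b^*\|_2$. Since $b^*=Ax^*$ is the unique nearest point of $Z(A,u)$ to $b$ and $Ax\in Z(A,u)$, the variational characterization of the projection gives $\dotp{b-b^*}{Ax-b^*}\le 0$, whence the Pythagorean estimate $\|Ax-b^*\|^2\le\|Ax-b\|^2-\|b^*-b\|^2=2\gap(x)$, i.e.\ $\|Ax-b^*\|_2\le\sqrt{2\gap(x)}$. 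Combining the three estimates yields $\gap(x)\le \|z\|\cdot m\cdot\kappa(\cX_A)\cdot\sqrt{2\gap(x)}$; dividing through by $\sqrt{\gap(x)}$ (the claimed bound being trivial when $\gap(x)=0$) gives $\|z\|\ge \sqrt{\gap(x)}/(\sqrt{2}\,m\cdot\kappa(\cX_A))$, as required. The convexity and Cauchy--Schwarz steps are routine; the only other point needing care beyond the sign analysis is the projection estimate $\|Ax-b^*\|^2\le 2\gap(x)$, which I expect to be straightforward via the convexity of $Z(A,u)$.
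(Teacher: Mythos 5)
Your proposal is correct and follows essentially the same route as the paper: convexity of $f$ to get $\gap(x)\le\dotp{g^x}{x-x^*}$, the sign analysis replacing $-g^x$ by $z$ using stability and $x^*\ge\mathbf{0}$, Cauchy--Schwarz combined with Lemma~\ref{lem:opt-prox}, and finally the bound $\|Ax-b^*\|^2\le 2\gap(x)$ (which you derive from the projection variational inequality, while the paper equivalently invokes first-order optimality at $x^*$). No gaps.
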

\begin{proof}
Let $x^*\ge \bf0$ be an optimal solution to \eqref{eq:min-norm-1} 
as in Lemma~\ref{lem:opt-prox}, and $b^*=Ax^*$.
Using convexity of $f(x):=\tfrac{1}{2}\|Ax-b\|^2$, 
\[
p^*=f(x^*)\ge f(x)+\pr{g}{x^*-x}\ge f(x)-\pr{z}{x^*-x}\, , 
\]
where the second inequality follows by noting that for each $i\in N$, 
either $z(i)=-g(i)$, or $z(i)=0$ and $g(i)(x^*(i)-x(i))\ge 0$. 
From the Cauchy-Schwarz inequality and Lemma~\ref{lem:opt-prox}, we get
\[
p^*\ge f(x)-\|z\|\cdot \|x^*-x\|\ge f(x)- m\cdot\kappa(\cX_A)\cdot \|Ax-b^*\|\cdot \|z\|\, ,
\]
that is,
\[
\|z\|\ge \frac{\gap(x)}{m\cdot\kappa(\cX_A)\cdot\|Ax-b^*\|}\, .
\]
The proof is complete by showing
\begin{equation}\label{eq:ps-bs}
2\gap(x)\ge \|Ax-b^*\|^2\, .
\end{equation}
Recalling that $\gap(x)=\tfrac{1}{2}\|Ax-b\|^2-\tfrac{1}{2}\|Ax^*-b\|^2$ and that $b^*=Ax^*$, this is equivalent to 
\[
\pr{Ax-Ax^*}{Ax^*-b}\ge 0\, .
\]
This can be further written as
\[
\pr{x-x^*}{g^{x^*}}\ge 0\, ,
\]
which is implied by the first order optimality condition at $x^*$. This proves \eqref{eq:ps-bs}, and hence the lemma follows.
\end{proof}

\begin{proof}[Proof of Theorem~\ref{thm:geom-progress}]
The proof for the bound in projected gradient updates is immediate from Lemma~\ref{lem:update-progress-PG} and Lemma~\ref{lem:pg-z-bound}. For coordinate updates, recall that $j$ is selected as the index of the largest component $z(j)$. Thus, $z(j)^2\ge \|z\|^2/n$, and $\|A^j\|\le \|A\|$.

For the second part, the statement follows for projected gradient updates by the first part and by noting that there are at most $n$ minor cycles in every major cycle.
For coordinate updates,  every major cycle adds  one component to $J(x)$ whereas every minor cycle removes at least one. Hence, the total number of minor cycles is at most $m$ plus the total number of major cycles.
\end{proof}

\subsection{Overall convergence bounds}
\label{sec:overall}
In this subsection, we prove Theorem~\ref{thm:conv-main}.
Using Lemma~\ref{lem:t-step-convergence} and Theorem~\ref{thm:geom-progress}, we can derive the following stronger proximity bound:

\begin{lemma}\label{lem:strong-prox}
Consider an NNLS instance of \eqref{eq:min-norm-1}.
Let $x\ge\bf0$ 
be an iterate of Algorithm~\ref{alg:mnp2}  using projected gradient or coordinate updates, and 
let $x'\ge\bf0$ be any  later iterate. 
Then, for a value 
\[
\Theta:=O(\sqrt{n}m^2\cdot\kappa^2(\cX_A)\cdot\|A\|)\, ,
\]
 we have
\[
\|x-x'\|\le \Theta\sqrt{\gap(x)}\, .
\]
\end{lemma}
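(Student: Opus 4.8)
The plan is to combine the one-shot proximity bound of Lemma~\ref{lem:t-step-convergence} with the geometric decay of the optimality gap guaranteed by Theorem~\ref{thm:geom-progress}, partitioning the run of the algorithm between $x$ and $x'$ into \emph{epochs} over which $\gap$ at least halves. A direct application of Lemma~\ref{lem:t-step-convergence} alone is insufficient: if $x'$ is reached after $t$ updates, that lemma gives only $\|x-x'\|\le m\kappa(\cX_A)\sqrt{2t}\cdot\sqrt{\gap(x)}$ (using $\gap(x)-\gap(x')\le\gap(x)$), and the total number $t$ of intermediate cycles can be arbitrarily large. The whole point is to avoid paying a $\sqrt{t}$ factor across the entire run.

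I would set $T:=O(nm^2\cdot\kappa^2(\cX_A)\cdot\|A\|^2)$ to be the number of cycles after which Theorem~\ref{thm:geom-progress} guarantees the gap at least halves, and let $x=x^{(0)},x^{(1)},x^{(2)},\dots$ be the iterates at the epoch boundaries, where $x^{(k)}$ is reached from $x$ after exactly $kT$ consecutive major/minor updates. Every iterate lies in $\B(u)$, hence is $\ge\bf0$ in the NNLS setting, so Theorem~\ref{thm:geom-progress} applies at each boundary; combined with the fact that $\|Ax-b\|$ (and thus $\gap$) is non-increasing along the run, this yields $\gap(x^{(k+1)})\le\tfrac12\gap(x^{(k)})$, and therefore $\gap(x^{(k)})\le 2^{-k}\gap(x)$. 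Supposing $x'$ is reached after $t^*$ updates, it lies in epoch $K=\lfloor t^*/T\rfloor$, i.e., on the segment between $x^{(K)}$ and $x^{(K+1)}$.

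Next I would apply Lemma~\ref{lem:t-step-convergence} separately to each epoch. For a full epoch ($t=T$), and likewise for the final partial epoch comparing $x^{(K)}$ to $x'$ (with $t\le T$), the lemma gives $\|x^{(k)}-x^{(k+1)}\|\le m\kappa(\cX_A)\sqrt{2T}\cdot\sqrt{\gap(x^{(k)})}$ after bounding $\gap(x^{(k)})-\gap(x^{(k+1)})\le\gap(x^{(k)})$. Summing along the path via the triangle inequality,
\[
\|x-x'\|\le m\kappa(\cX_A)\sqrt{2T}\sum_{k\ge 0}\sqrt{\gap(x^{(k)})}\le m\kappa(\cX_A)\sqrt{2T}\cdot\sqrt{\gap(x)}\sum_{k\ge 0}2^{-k/2}\, .
\]
The geometric series $\sum_{k\ge 0}2^{-k/2}=\tfrac{\sqrt2}{\sqrt2-1}$ is an absolute constant, so $\|x-x'\|\le O(1)\cdot m\kappa(\cX_A)\sqrt{T}\cdot\sqrt{\gap(x)}$. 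Substituting $\sqrt{T}=O(\sqrt{n}\,m\kappa(\cX_A)\|A\|)$ gives $m\kappa(\cX_A)\sqrt{T}=O(\sqrt{n}\,m^2\kappa^2(\cX_A)\|A\|)=\Theta$, as required.

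The crux of the argument, and its only real obstacle, is precisely this interplay: the square-root dependence on the epoch length in Lemma~\ref{lem:t-step-convergence}, multiplied against the square-root decay $2^{-k/2}$ of $\sqrt{\gap(x^{(k)})}$, produces a \emph{convergent} geometric series rather than a divergent sum, replacing the naive $\sqrt{t}$ factor by a constant $\sqrt{T}$ per epoch. The minor points to verify are that the final partial epoch contributes at most one extra term (with $t\le T$), and that invoking Theorem~\ref{thm:geom-progress} at every boundary is legitimate, which holds since the halving guarantee applies from any nonnegative starting iterate and $\gap$ is monotone.
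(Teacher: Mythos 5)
Your proposal is correct and follows essentially the same route as the paper: partition the run into epochs of length $T$ over which Theorem~\ref{thm:geom-progress} forces geometric decay of the gap, apply Lemma~\ref{lem:t-step-convergence} within each epoch, and sum the resulting geometric series (the paper takes $T$ large enough that the gap drops by a factor $4$ per epoch so the series is $\sum 2^{-k}$, while you use halving and $\sum 2^{-k/2}$ --- an immaterial difference). The handling of the final partial epoch and the substitution $\sqrt{T}=O(\sqrt{n}\,m\kappa(\cX_A)\|A\|)$ also match the paper's argument.
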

\begin{proof}
According to Theorem~\ref{thm:geom-progress}, after 
$T:=O(nm^2\cdot\kappa^2({\cX_A})\cdot\|A\|^2)$ major and minor cycles, we get to an iterate $x''$ with $\gap(x'')\le \gap(x)/4$. Thus, Lemma~\ref{lem:t-step-convergence} gives
\[
\|x-x''\|\le m\cdot \sqrt{2T}\cdot\kappa(\cX_A)\cdot\sqrt{\gap(x)}\, .
\]
Let us now define $x^{(k)}$ as the iterate following $x$ after $Tk$ major and minor cycles; we let $x^{(0)}:=x$. By Theorem~\ref{thm:geom-progress}, $x^{(k)}\le \gap(x)/4^k$, and similarly as above, for each $k=0,1,2,\ldots$ we get 
\[
\|x^{(k)}-x^{(k+1)}\|\le m\cdot \sqrt{2T}\cdot\kappa(\cX_A)\cdot\frac{\sqrt{\gap(x)}}{2^k}\, .
\]
The above bound also holds for any iterate $x'$ between $x^{(k)}$ an $x^{(k+1)}$. 
Using these bounds and the triangle inequality, for any iterate $x'$ after $x$, we obtain 
\[
\|x-x'\|\le 2m\cdot \sqrt{2T}\cdot\kappa(\cX_A)\cdot\sqrt{\gap(x)}\, .
\]
This completes the proof.
\end{proof}
We need one more auxiliary lemma.
\begin{lemma}\label{lem:x-hat-x}
Consider  an NNLS instance of \eqref{eq:min-norm-1}, and  let $x\ge \bf{0}$ be a stable point.
Let $\hat x\ge \bf{0}$ such that for each $i\in N$, either $\hat x(i)=x(i)$, or $\hat x(i)=0<x(i)$. Then,
\[
\|A\hat x-b\|^2=\|Ax-b\|^2+\|A\hat x-Ax\|^2\, .
\]
\end{lemma}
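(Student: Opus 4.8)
The plan is to recognize this as a Pythagorean identity and to prove it by showing that the relevant cross term vanishes. Expanding the square, I would write $A\hat x-b=(A\hat x-Ax)+(Ax-b)$ and compute
\[
\|A\hat x-b\|^2=\|A\hat x-Ax\|^2+\|Ax-b\|^2+2\dotp{A\hat x-Ax}{Ax-b}\, .
\]
So the entire claim reduces to verifying that $\dotp{A\hat x-Ax}{Ax-b}=0$.

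The key step is to rewrite this inner product in terms of the gradient. Since $A\hat x-Ax=A(\hat x-x)$, I would move $A$ across the inner product to obtain
\[
\dotp{A(\hat x-x)}{Ax-b}=\dotp{\hat x-x}{A^\top(Ax-b)}=\dotp{\hat x-x}{g^x}\, ,
\]
using the definition of the gradient $g^x=A^\top(Ax-b)$ from \eqref{eq:gradient}. Now I would examine the support of $\hat x-x$. By hypothesis, for each $i\in N$ either $\hat x(i)=x(i)$, in which case the $i$th coordinate contributes nothing, or $\hat x(i)=0<x(i)$, in which case $i$ satisfies $x(i)>0$. In the NNLS setting $\Iu{x}=\emptyset$, so $x(i)>0$ is exactly the condition $i\in\J{x}$. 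Hence $\supp(\hat x-x)\subseteq \J{x}$.

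Finally I would invoke stability: since $x$ is a stable point, Lemma~\ref{lem:grad-x} gives $g^x(j)=0$ for every $j\in\J{x}$. Therefore every nonzero coordinate of $\hat x-x$ is multiplied by a vanishing coordinate of $g^x$, so $\dotp{\hat x-x}{g^x}=0$, which yields the desired identity. I do not expect any real obstacle here: the content is entirely carried by Lemma~\ref{lem:grad-x}, and the only point requiring a moment of care is the bookkeeping that the support of $\hat x-x$ lands inside $\J{x}$ precisely because the only coordinates that change are positive coordinates of $x$ being zeroed out.
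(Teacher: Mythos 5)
Your proof is correct and follows essentially the same route as the paper's: both reduce the identity to the vanishing of the cross term $\dotp{A\hat x-Ax}{Ax-b}=\dotp{g^x}{\hat x-x}$ and then apply Lemma~\ref{lem:grad-x} to the coordinates where $\hat x$ and $x$ differ, all of which lie in $\J{x}$. No gaps.
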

\begin{proof}
The claim is equivalent to showing
\[
\pr{A\hat x-Ax}{Ax-b}=0\, .
\]
We can write $\pr{A\hat x-Ax}{Ax-b}=\pr{g^x}{\hat x-x}$. By assumption, $\hat x(i)-x(i)\neq 0$ only if $x(i)>0$, but in this case $g^x(i)=0$ by Lemma~\ref{lem:grad-x}.
\end{proof}

For the threshold $\Theta$ as in Lemma~\ref{lem:strong-prox} and 
for any $x\ge \bf0$, let us define 
\[
\Js{x}:=\left\{i\mid\, x(i)>\Theta\sqrt{\gap(x)}\right\}  \, .
\]
The following is immediate from Lemma~\ref{lem:strong-prox}.
\begin{lemma}\label{lem:I-J-s}
Consider  an NNLS instance of \eqref{eq:min-norm-1}.
Let $x\ge \bf0$ be an iterate of Algorithm~\ref{alg:mnp2} 
using projected gradient updates, and $x'\ge \bf0$ be any  later iterate. 
Then,
\[
\Js{x}\subseteq \J{x'}\, .
\]
\end{lemma}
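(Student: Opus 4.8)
The plan is to deduce the claim directly from the proximity bound of Lemma~\ref{lem:strong-prox}, which is precisely the statement that the threshold $\Theta$ was calibrated for. First I would recall that in the NNLS setting every upper bound is infinite, so $\Iu{x'}=\emptyset$ and hence $\J{x'}=\{i\in N\mid x'(i)>0\}$. Thus establishing $\Js{x}\subseteq\J{x'}$ reduces to showing that every coordinate $i$ with $x(i)>\Theta\sqrt{\gap(x)}$ remains strictly positive at the later iterate $x'$.

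The single substantive step is to convert the Euclidean bound on $\|x-x'\|$ into a coordinatewise bound. Fix $i\in\Js{x}$, so that $x(i)>\Theta\sqrt{\gap(x)}$ by definition. Since $x'$ is a later iterate obtained with projected gradient updates, Lemma~\ref{lem:strong-prox} applies and gives $\|x-x'\|\le\Theta\sqrt{\gap(x)}$. Using that a single coordinate is dominated by the Euclidean norm, $|x(i)-x'(i)|\le\|x-x'\|$, I would chain the inequalities
\[
x(i)-x'(i)\le|x(i)-x'(i)|\le\|x-x'\|\le\Theta\sqrt{\gap(x)}<x(i)\, ,
\]
where the final strict inequality is exactly the membership $i\in\Js{x}$. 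Rearranging the outer terms yields $x'(i)>0$, i.e.\ $i\in\J{x'}$.

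As $i\in\Js{x}$ was arbitrary, the inclusion follows. There is no genuine obstacle here: all the analytic content is already encapsulated in Lemma~\ref{lem:strong-prox}, and what remains is only the elementary passage from the $\ell_2$-distance bound to a per-coordinate bound, combined with the strictness of the inequality defining $\Js{x}$. I would also note that the restriction to projected gradient updates in the statement is not essential for this deduction, since Lemma~\ref{lem:strong-prox} holds verbatim for coordinate updates as well; the same three-line argument would cover that case without change.
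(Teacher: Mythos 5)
Your argument is correct and is exactly the intended unpacking of the paper's one-line justification, which simply declares the lemma ``immediate from Lemma~\ref{lem:strong-prox}'': combine the proximity bound $\|x-x'\|\le\Theta\sqrt{\gap(x)}$ with the coordinatewise inequality $|x(i)-x'(i)|\le\|x-x'\|$ and the strict threshold defining $\Js{x}$ to get $x'(i)>0$, noting $\Iu{x'}=\emptyset$ in the NNLS setting. Your closing remark that the same deduction covers coordinate updates is also consistent with how the lemma is used in the proof of Theorem~\ref{thm:conv-main}.
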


We are ready to prove Theorem~\ref{thm:conv-main}. 
\begin{proof}[Proof of Theorem~\ref{thm:conv-main}]
At any point of the algorithm, let $\Jss$ denote the union of the sets $\Js{x}$ for all iterations thus far. 
Consider a stable iterate $x$ at the beginning of any major cycle, and let 
\[
\varepsilon:=\frac{\sqrt{\gap(x)}}{4n\cdot\Theta\cdot\|A\|}\, .
\]
 Theorem~\ref{thm:geom-progress} guarantees that within 
 $O(nm^{2}\cdot\kappa^2(\cX_A)\cdot\|A\|^2\cdot\log(n+\kappa(\cX_A)))$ major and minor cycles we  arrive at an iterate $x'$ such that $\sqrt{\gap(x')}<\varepsilon$.  We note that $\log(n+\kappa(\cX_A)+\|A\|)=O(\log(n+\kappa(\cX_A)))$ according to Remark~\ref{remark:a-norm}.
  We show that 
\begin{equation}\label{eq:x-xp-extend}
\Js{x'}\cap \In{x}\neq\emptyset\, .
\end{equation} 
From here, we can conclude that  $\Jss$ was extended between iterates $x$ and $x'$. This may happen at most $n$ times, leading to the claimed bound on the total number of major and minor cycles. Using Theorem~\ref{thm:geom-progress} we also obtain the respective bounds on the number of major cycles for the two different updates.

For a contradiction, assume that \eqref{eq:x-xp-extend} does not hold. Thus, for every $i\in \In{x}$, we have $x'(i)\le \Theta\varepsilon$.
Let us define $\hat x\in\R^N$ as
\[
\hat x(i):=\begin{cases}
0&\mbox{if }i\in \In{x}\, ,\\
x'(i)&\mbox{if }i\in \J{x}\, .
\end{cases}
\]
By the above assumption, $\|\hat x-x'\|_\infty\le \Theta\varepsilon$, and therefore $\|A\hat x-Ax'\|\le \sqrt{n}\Theta\|A\|\varepsilon$. From Lemma~\ref{lem:x-hat-x}, we can bound
\begin{equation}\label{eq:x-hat-x-bound}
\|A\hat x-b\|^2=\|A x'-b\|^2+\|A\hat x-Ax'\|^2 \le 2p^*+ 
(n\Theta^2\|A\|^2+2)\varepsilon^2\, . 
\end{equation}

Recall that since $x$ is a stable solution, 
\[
\|Ax-b\|=\min\left\{\|Ay-b\|:\, y\in\bL(\In{x},\emptyset)\right\}\, .
\]
Since $\hat x$ is a feasible solution to this program, it follows that $\|A\hat x-b\|^2\ge \|Ax-b\|^2$.
We get that
\[
2\gap(x)=\|Ax-b\|^2-2p^*\le \|A\hat x-b\|^2-2p^*\le (n\Theta^2\|A\|^2+2)\varepsilon^2\, ,
\]
in contradiction with the choice of $\varepsilon$.
\end{proof}

\section{Computational Experiments}\label{sec:experiments}
We give preliminary computational experiments of different versions of our algorithm, and compare them to standard gradient methods and existing NNLS implementations. The experiments were  programmed and executed by MATLAB version R2023a on a personal computer having  11th Gen Intel(R) Core(TM) i7-11370H @ 3.30GHz and 16GB of memory.

We considered two families of randomly generated  NNLS instances. In Appendix~\ref{app:cap}, we also present experiments for capacitated instances (finite $u(i)$ values).

We tested each combination of two update methods: Projected Gradient (PG), and coordinate (C); and two centroid mappings, the `oblivious' mapping  \eqref{eq:pseudoinv} and the `local norm' mapping \eqref{eq:ctr-D-min} with diagonal entries $1/x(i),~i\in N$. Recall that for coordinate updates and starting from $x=\mathbf{0}$, there is a unique centroid mapping by Lemma~\ref{lem:wolfe-indep}.

Our first benchmarks are  
the projected gradient (PG) and the projected fast (accelerated) gradient (PFG) methods. In contrast to our algorithms, these do not finitely terminate. We stopped the algorithms once they found a near-optimal solution within a certain accuracy threshold. 

Further, we also compare our algorithms against the standard MATLAB implementation of the Lawson--Hanson algorithm called \texttt{lsqnonneg}, and against the implementation \texttt{TNT-NN} from \cite{myre2017tnt}. We note that \texttt{lsqnonneg} and the coordinate update version of our algorithms are essentially the same.

\begin{table}[hbtp]
  \caption{Computation time (in sec) for uncapacitated rectangular instances}
  \label{t7}
  \centering
  \begin{tabular}{clllllll}
    \hline
$m$ & 100 & 200 & 300 & 400 & 500 & 500 & 500 \\
$n$ & 200 & 400 & 600 & 800 & 1000 & 2000 & 3000 \\
 \hline
PG+(5) & 0.06 & 0.50 & 1.77 & 5.09 & 11.92 & 20.07 & 50.23 \\
PG+(6) & 0.01 & 0.06 & 0.18 & 0.28 & 0.29 & 0.17 & 0.22 \\
C & 0.05 & 0.31 & 1.02 & 2.52 & 4.84 & 2.98 & 2.90 \\
\hline
PG & 0.84 & 5.34 & 16.16 (1)& 21.85 (1)& 24.93 (2)& 0.07 & 0.05 \\
PFG & 0.06 & 0.53 & 1.52 & 2.06 & 4.22 & 0.09 & 0.07 \\
lsqnonneg & 0.01 & 0.15 & 0.52 & 1.28 & 2.64 & 1.53 & 1.55 \\
TNT-NN & 0.01 & 0.04 & 0.08 & 0.17 & 0.29 & 0.25 & 0.64 \\
 \hline
  \end{tabular}
\end{table}

\begin{table}[hbtp]
  \caption{\# of major cycles for uncapacitated rectangular instances}
  \label{t8}
  \centering
  \begin{tabular}{clllllll}
    \hline
$m$ & 100 & 200 & 300 & 400 & 500 & 500 & 500 \\
$n$ & 200 & 400 & 600 & 800 & 1000 & 2000 & 3000  \\
 \hline
PG+(5) & 6.4 & 9.4 & 13.4 & 11.6 & 13.4 & 1.0 & 1.0 \\
PG+(6) & 2.0 & 3.0 & 3.0 & 2.4 & 1.6 & 1.0 & 1.0 \\
C & 128.8 & 267.4 & 404.0 & 536.6 & 664.4 & 520.8 & 501.8 \\
 \hline
  \end{tabular}
\end{table}

\begin{table}[hbtp]
  \caption{The total \# of minor cycles for uncapacitated rectangular instances}
  \label{t9}
  \centering
  \begin{tabular}{clllllll}
    \hline
$m$ & 100 & 200 & 300 & 400 & 500 & 500 & 500\\
$n$ & 200 & 400 & 600 & 800 & 1000 & 2000 & 3000\\
 \hline
PG+(5) & 201.8 & 462.6 & 727.8 & 1129.8 & 1619.4 & 1029.6 & 1517.4 \\
PG+(6) & 17.4 & 32.6 & 42.0 & 32.0 & 18.8 & 2.0 & 1.0 \\
C & 157.6 & 336.6 & 512.2 & 675.8 & 828.4 & 542.4 & 509.2 \\
 \hline
  \end{tabular}
\end{table}

\begin{table}[hbtp]
  \caption{Computation time (in sec) for uncapacitated near-square instances}
  \label{t10}
  \centering
\scalebox{0.7}{
  \begin{tabular}{cllllllllllll}
    \hline
$m$ & 1000 & 1000 & 1000 & 1000 & 1000 & 1000 & 1000 & 1000 & 1000 & 1000 & 1000 & 1000\\
$n$ & 1020 & 1020 & 1020 &1020 & 1050 & 1050 & 1050 & 1050 & 1100 & 1100 & 1100 & 1100\\
Status & I & F (0.1) & F (0.5) & F (1) & I & F (0.1) & F (0.5) & F(1) & I & F (0.1) & F (0.5) & F (1) \\
\hline
PG+(5) & 13.44 & 0.88 & 2.59 & 0.83 & 14.74 & 2.21 & 2.20 & 1.49 & 16.85 & 4.46 & 9.61 & 2.40 \\ 
PG+(6) & 18.67 & 0.83 & 3.39 & 0.24 & 18.99 & 0.93 & 0.95 & 0.25 & 18.81 & 1.03 & 1.05 & 0.25 \\ 
C & 5.02 & 0.13 & 2.92 & 34.75 & 5.33 & 0.14 & 3.24 & 36.28 & 6.13 & 0.15 & 3.68 & 35.88 \\ \hline
PG & 0.17 & 0.49 & 1.41 & 60.00 (5)& 0.20 & 0.59 & 1.61 & 60.00 (5)& 0.25 & 0.81 & 2.32 & 18.36 \\
PFG & 0.16 & 0.12 & 0.31 & 60.00 (5)& 0.19 & 0.12 & 0.34 & 60.00 (5)& 0.24 & 0.15 & 0.41 & 60.00 (5)\\
lsqno & 4.74 & 0.11 & 2.65 & 27.97 & 4.22 & 0.10 & 2.46 & 28.00 & 5.75 & 0.11 & 3.30 & 29.19 \\
TNT-NN & 0.18 & 0.10 & 0.17 & 0.41 & 0.16 & 0.08 & 0.23 & 0.52 & 0.19 & 0.09 & 0.24 & 0.62 \\
 \hline
  \end{tabular}
}
\end{table}

\begin{table}[hbtp]
  \caption{\# of major cycles for uncapacitated near-square instances}
  \label{t11}
  \centering
\scalebox{0.8}{
  \begin{tabular}{cllllllllllll}
    \hline
$m$ & 1000 & 1000 & 1000 & 1000 & 1000 & 1000 & 1000 & 1000 & 1000 & 1000 & 1000 & 1000\\
$n$ & 1020 & 1020 & 1020 &1020 & 1050 & 1050 & 1050 & 1050 & 1100 & 1100 & 1100 & 1100\\
Status & I & F (0.1) & F (0.5) & F (1) & I & F (0.1) & F (0.5) & F (1) & I & F (0.1) & F (0.5) & F (1) \\
 \hline
PG+(5) & 4.2 & 1.0 & 1.2 & 1.2 & 4.0 & 1.0 & 1.0 & 1.0 & 4.4 & 1.0 & 1.6 & 1.0 \\
PG+(6) & 4.2 & 1.0 & 1.2 & 1.0 & 4.2 & 1.0 & 1.0 & 1.0 & 4.2 & 1.0 & 1.0 & 1.0 \\
C & 511.2 & 98.4 & 406.0 & 1080.6 & 524.4 & 101.0 & 423.4 & 1096.2 & 553.6 & 105.8 & 446.6 & 1092.6 \\
\hline
  \end{tabular}
}
\end{table}

\begin{table}[hbtp]
  \caption{Total \# of minor cycles for uncapacitated near-square instances}
  \label{t12}
  \centering
  \scalebox{0.8}{
  \begin{tabular}{cllllllllllll}
    \hline
$m$ & 1000 & 1000 & 1000 & 1000 & 1000 & 1000 & 1000 & 1000 & 1000 & 1000 & 1000 & 1000\\
$n$ & 1020 & 1020 & 1020 &1020 & 1050 & 1050 & 1050 & 1050 & 1100 & 1100 & 1100 & 1100\\
Status & I & F (0.1) & F (0.5) & F (1) & I & F (0.1) & F (0.5) & F (1) & I & F (0.1) & F (0.5) & F (1) \\
 \hline
PG+(5) & 572.4 & 21.0 & 78.2 & 18.6 & 601.4 & 51.0 & 51.0 & 32.8 & 636.0 & 101.0 & 270.8 & 52.2 \\
PG+(6) & 571.6 & 14.6 & 76.8 & 3.2 & 570.8 & 15.8 & 16.0 & 3.2 & 557.4 & 16.4 & 17.0 & 3.0 \\
C & 512.4 & 97.4 & 411.4 & 1160.2 & 529.0 & 100.0 & 431.8 & 1191.4 & 557.2 & 104.8 & 459.2 & 1184.2 \\
\hline
  \end{tabular}
}
\end{table}

\paragraph{Generating instances} 
We generated two families of experiments.
In the \emph{rectangular} experiments $n\ge 2m$, and in the \emph{near-square} experiments $m\le n\le 1.1m$. 
In both cases, the entries of the $m\times n$ matrix $A$ were chosen independently uniformly at random from the interval $[-0.5,0.5]$. In the rectangular experiments, the entries of $b$ were also chosen independently uniformly at random from $[-0.5,0.5]$.
Thus, the underlying LP $Ax=b$, $x\ge \bf{0}$ may or may not be feasible.

For the near-square instances, 
 such a random choice of $b$ leads to infeasible instances with high probability. We used this method to generate infeasible instances. We also constructed families where the LP is feasible as follows. For a sparsity parameter $\chi\in (0,1]$, we sampled a subset $J\subseteq N$, adding each variable independently with probability $\chi$, and generated coefficients $\{z_i: i\in J\}$ independently at random from $[0,1]$. We then set $b=\sum_{j\in J}A^j z_j$. 

\paragraph{Computational results}
We stopped each algorithm when the computation time reached 60 seconds. 
For each $(m,n)$, we test all the algorithms 5 times and the results shown here are 
the 5-run averaged figures. 

Table~\ref{t7} shows the overall computational times for rectangular instances; 
values in brackets show 
the number of trials whose computation time exceeded 60 seconds.
Tables~\ref{t8} and \ref{t9} show the number of major cycles, and the total number of minor cycles, respectively. 
Table~\ref{t10} shows the overall computational times for near-square instances. The status `I' denotes infeasible instances and `F' feasible instances, with the sparsity parameter $\chi$ in brackets, with values 0.1, 0.5, and 1.
Tables~\ref{t11} and \ref{t12} show the number of major cycles, and the total number of minor cycles, respectively, for near-square instances.

\paragraph{Comparison of the results}
For rectangular instances, 
 the `local-norm' update \eqref{eq:ctr-D-min} performs significantly better than the `oblivious' update \eqref{eq:pseudoinv}.
The `oblivious' updates are also outperformed by the coordinate updates, both in terms of running time as well as in the total number of minor cycles.

As noted above, while our algorithm with coordinate updates and  \texttt{lsqnonneg} are basically the same, the running time of the latter algorithm is better by around factor two. This is since \texttt{lsqnonneg} might be using more efficient linear algebra operations, in contrast to our more basic implementation.

The algorithm \texttt{TNT-NN} from \cite{myre2017tnt} is a fast practical algorithm using a number of heuristics, representing the state-of-the-art active set method for NNLS. 
Notably, our algorithm with `local-norm' updates \eqref{eq:ctr-D-min} is almost always within a factor two for rectangular instances, and performs better in some cases. This is despite the fact that we only use a basic implementation without using more efficient linear algebra methods or including further heuristics.

For rectangular instances, \texttt{TNT-NN}  and `local-norm' updates also  outperform fast projected gradient in most cases.

\medskip

The picture is more mixed for near-square instances. There is a marked difference between feasible and infeasible instances. The `local-norm' and `oblivious' update rules perform similarly, with a small number of major cycles. The number of minor cycles is much higher for infeasible instances. For infeasible instances, coordinate updates are faster than either variant of the PG update rule, while PG updates are faster for feasible instances.

The algorithm \texttt{TNT-NN} is consistently faster than our algorithm, with better running times for infeasible instances. For projected gradient and projected fast gradient, the running times are similar to  \texttt{TNT-NN} except for feasible instances with sparsity parameter $\chi=1$, where they do not terminate within the 60 seconds limit in most cases. In contrast, these appear to be the easiest instances to our method with PG updates with the `local-norm' mapping.

\section{Concluding Remarks}\label{sec:concl}

We have proposed a new `Update-and-Stabilize' framework for the minimum-norm-point problem \eqref{eq:min-norm-1}. Our method combines classical first order methods with `stabilizing' steps using the centroid mapping that amounts to computing a projection to an affine subspace. Our algorithm is always finite, and is strongly polynomial when the associated circuit imbalance measure is constant. In particular, this gives the first such convergence bound for the Lawson--Hanson algorithm.

There is scope for further improvements both in the theoretical analysis and in practical implementations. In this paper, we only analyzed the running time for uncapacitated instances. Combined with existing results from \cite{Necoara2019}, we expect that similar bounds can be shown for capacitated instances. We note that for the analysis, it would suffice to run minor cycles only once in a while, say after every $O(n)$ gradient updates. From a practical perspective however, running minor cycles after every update appears to be highly beneficial in most cases.
 Rigorous computational experiments, using standard families of LP benchmarks, is left for future work. 

Future work should also compare the performance of our algorithms to  the  
gradient projection method \cite{conn1988testing,nocedal1999numerical}, using techniques from that method to our algorithm and vice versa. We note that for NNLS instances, starting from a stable point our algorithm already finds the optimal gradient update. However,  a similar search as in gradient projection methods may be useful in the capacitated case.  In the other direction, we note that the conjugate gradient iterations used in gradient projection do not correspond to an explicit choice of a centroid mapping. A possible enhancement of gradient projection could come from approximating a  `local-norm' objective as in \eqref{eq:ctr-D-min} in the second stage.

We also point out that the `local-norm' selection rule \eqref{eq:ctr-D-min} was inspired by the affine scaling method; the important difference is that our algorithm moves all the way to the boundary, whereas affine scaling stays in the interior throughout.

\section*{Acknowledgments}
We are grateful to Andreas W\"achter for pointing us to the literature on the gradient projection method.
The third author would like to thank Richard Cole, Daniel Dadush, Christoph Hertrich, Bento Natura, and Yixin Tao for discussions on first order methods and circuit imbalances.

\bibliographystyle{abbrv}
\bibliography{fkv}

\appendix
\section{Computational experiments for capacitated instances} \label{app:cap}

Tables~\ref{t1}--\ref{t6} show experimental results for capacitated instances. The instances were generated as in the NNLS case, with  upper capacities $u(i)=1$, $i\in N$. We did not use the benchmarks \texttt{lsqnonneg} and \texttt{TNT-NN} since these are not implemented for the capacitated setting. On the other hand, we also implemented our method with Frank--Wolfe updates, using both `local-norm' and `oblvious' centroid mappings. 
Among the first order benchmarks, we also included conditional gradient methods:  the Frank--Wolfe 
and away-step Frank Wolfe (AFW) methods.

In our framework, the Frank--Wolfe and projected gradient update rules performed similarly. In contrast, 
among the benchmark experiments, projected gradient methods consistently outperformed conditional gradient methods: the latter methods did not terminate within the 60 seconds limit for most cases. 

The overall experience is similar for uncapacitated (NNLS) and capacitated instances. Our method does well for rectangular instances, but is generally slower for infeasible near-square instances.

\begin{table}[hbtp]
  \caption{Computation time (in sec) for capacitated rectangular instances}
  \label{t1}
  \centering
\scalebox{0.9}{
  \begin{tabular}{clllllll}
    \hline
$m$ & 100 & 200 & 300 & 400 & 500 & 500 & 500\\
$n$ & 200 & 400 & 600 & 800 & 1000 & 2000 & 3000\\
 \hline
FW+(5) & 0.07 & 0.43 & 1.56 & 4.81 & 9.73 & 19.67 & 49.56 \\
FW+(6) & 0.02 & 0.07 & 0.22 & 0.49 & 0.63 & 0.17 & 0.31 \\
PG+(5) & 0.07 & 0.49 & 1.64 & 5.44 & 10.56 & 19.70 & 49.68 \\
PG+(6) & 0.01 & 0.07 & 0.20 & 0.44 & 0.55 & 0.17 & 0.32 \\
C & 0.05 & 0.28 & 0.94 & 2.19 & 5.35 & 2.88 & 2.88 \\ \hline
FW & 51.80 (4)& 54.78 (4)& 60.00 (5)& 60.00 (5)& 60.00 (5)& 0.18 & 0.20 \\
AFW & 51.76 (4)& 54.68 (4)& 60.00 (5)& 60.00 (5)& 60.00 (5)& 0.09 & 0.14 \\
PG & 0.31 & 2.64 & 5.25 & 5.44 & 33.33 (1)& 0.04 & 0.04 \\
PFG & 0.02 & 0.12 & 0.24 & 0.42 & 0.84 & 0.05 & 0.06 \\
 \hline
  \end{tabular}
}
\end{table}

\begin{table}[hbtp]
  \caption{\# of major cycles for capacitated rectangular instances}
  \label{t2}
  \centering
  \begin{tabular}{clllllll}
    \hline
$m$ & 100 & 200 & 300 & 400 & 500 & 500 & 500  \\
$n$ & 200 & 400 & 600 & 800 & 1000 & 2000 & 3000  \\
 \hline
FW+(5) & 6.2 & 7.8 & 10.2 & 10.2 & 12.8 & 1.0 & 1.0 \\
FW+(6) & 2.6 & 3.4 & 3.6 & 3.4 & 3.2 & 1.0 & 1.0 \\
PG+(5) & 6.8 & 9.4 & 11.8 & 11.4 & 14.4 & 1.0 & 1.0 \\
PG+(6) & 2.6 & 2.8 & 3.8 & 3.2 & 3.6 & 1.0 & 1.0 \\
C & 127.0 & 253.6 & 389.0 & 507.6 & 697.6 & 514.8 & 502.2 \\
 \hline
  \end{tabular}
\end{table}

\begin{table}[hbtp]
  \caption{The total \# of minor cycles for capacitated rectangular instances}
  \label{t3}
  \centering
  \begin{tabular}{clllllll}
    \hline
$m$ & 100 & 200 & 300 & 400 & 500 & 500 & 500  \\
$n$ & 200 & 400 & 600 & 800 & 1000 & 2000 & 3000 \\
 \hline
FW+(5) & 186.6 & 378.6 & 629.2 & 1066.0 & 1356.4 & 1016.6 & 1498.8 \\
FW+(6) & 28.2 & 42.4 & 52.2 & 61.8 & 43.4 & 2.0 & 2.0 \\
PG+(5) & 212.4 & 442.4 & 668.6 & 1213.0 & 1465.4 & 1013.8 & 1498.4 \\
PG+(6) & 27.2 & 37.4 & 48.0 & 55.8 & 38.8 & 2.0 & 2.0 \\
C & 155.6 & 313.2 & 483.8 & 622.6 & 897.0 & 532.2 & 511.2 \\
\hline
  \end{tabular}
\end{table}

\begin{table}[hbtp]
  \caption{Computation time (in sec) for capacitated near-square instances}
  \label{t4}
  \centering
\scalebox{0.5}{
  \begin{tabular}{cllllllllllll}
    \hline
$m$ & 1000 & 1000 & 1000 & 1000 & 1000 & 1000 & 1000 & 1000 & 1000 & 1000 & 1000 & 1000\\
$n$ & 1020 & 1020 & 1020 &1020 & 1050 & 1050 & 1050 & 1050 & 1100 & 1100 & 1100 & 1100\\
Status & I & F (0.1) & F (0.5) & F (1) & I & F (0.1) & F (0.5) & F (1) & I & F (0.1) & F (0.5) & F (1) \\
 \hline
FW+(5) & 12.95 & 0.85 & 0.85 & 1.71 & 14.62 & 2.11 & 7.80 & 2.65 & 16.55 & 6.47 & 13.19 & 3.33 \\
FW+(6) & 18.10 & 0.78 & 0.77 & 0.26 & 18.69 & 0.86 & 0.88 & 0.25 & 18.44 & 0.97 & 1.04 & 0.26 \\
PG+(5) & 12.67 & 0.82 & 0.85 & 1.71 & 14.23 & 2.04 & 7.62 & 2.85 & 16.38 & 4.24 & 12.62 & 3.36 \\
PG+(6) & 17.80 & 0.77 & 0.80 & 0.25 & 18.15 & 0.89 & 3.26 & 0.24 & 18.39 & 0.97 & 1.04 & 0.25 \\
C & 4.78 & 0.13 & 3.04 & 39.38 & 5.29 & 0.14 & 3.34 & 37.89 & 5.66 & 0.15 & 3.84 & 40.21 \\ \hline
FW & 60.00 (5)& 60.00 (5)& 60.00 (5)& 60.00 (5)& 60.00 (5)& 60.00 (5)& 60.00 (5)& 60.00 (5)& 60.00 (5)& 60.00 (5)& 60.00 (5)& 60.00 (5)\\
AFW & 60.00 (5)& 60.00 (5)& 60.00 (5)& 60.00 (5)& 60.00 (5)& 60.00 (5)& 60.00 (5)& 60.00 (5)& 60.00 (5)& 60.00 (5)& 60.00 (5)& 60.00 (5)\\
PG & 0.11 & 0.49 & 1.28 & 60.00 (5)& 0.14 & 0.52 & 1.84 & 60.00 (5)& 0.17 & 0.69 & 2.22 & 17.30 \\
PFG & 0.10 & 0.11 & 0.28 & 60.00 (5)& 0.11 & 0.11 & 0.37 & 60.00 (5)& 0.15 & 0.14 & 0.36 & 60.00 (5)\\
\hline
  \end{tabular}
}
\end{table}

\begin{table}[hbtp]
  \caption{\# of major cycles for capacitated near-square instances}
  \label{t5}
  \centering
\scalebox{0.8}{
  \begin{tabular}{cllllllllllll}
    \hline
$m$ & 1000 & 1000 & 1000 & 1000 & 1000 & 1000 & 1000 & 1000 & 1000 & 1000 & 1000 & 1000\\
$n$ & 1020 & 1020 & 1020 &1020 & 1050 & 1050 & 1050 & 1050 & 1100 & 1100 & 1100 & 1100\\
Status & I & F (0.1) & F (0.5) & F (1) & I & F (0.1) & F (0.5) & F (1) & I & F (0.1) & F (0.5) & F (1) \\
 \hline
FW+(5) & 4.4 & 1.0 & 1.0 & 2.8 & 4.0 & 1.0 & 2.0 & 1.8 & 4.4 & 1.2 & 2.4 & 1.0 \\
FW+(6) & 3.8 & 1.0 & 1.0 & 1.0 & 4.2 & 1.0 & 1.0 & 1.0 & 4.6 & 1.0 & 1.0 & 1.0 \\
PG+(5) & 3.8 & 1.0 & 1.0 & 2.8 & 3.8 & 1.0 & 1.8 & 2.0 & 4.2 & 1.0 & 2.2 & 1.0 \\
PG+(6) & 3.6 & 1.0 & 1.0 & 1.0 & 4.0 & 1.0 & 1.2 & 1.0 & 4.2 & 1.0 & 1.0 & 1.0 \\
C & 508.6 & 108.2 & 487.2 & 1387.2 & 527.0 & 112.8 & 513.6 & 1388.4 & 542.4 & 115.2 & 538.2 & 1446.8 \\
\hline
  \end{tabular}
}
\end{table}

\begin{table}[hbtp]
  \caption{Total \# of minor cycles for capacitated near-square instances}
  \label{t6}
  \centering
\scalebox{0.8}{
  \begin{tabular}{cllllllllllll}
    \hline
$m$ & 1000 & 1000 & 1000 & 1000 & 1000 & 1000 & 1000 & 1000 & 1000 & 1000 & 1000 & 1000\\
$n$ & 1020 & 1020 & 1020 &1020 & 1050 & 1050 & 1050 & 1050 & 1100 & 1100 & 1100 & 1100\\
Status & I & F (0.1) & F (0.5) & F (1) & I & F (0.1) & F (0.5) & F (1) & I & F (0.1) & F (0.5) & F (1) \\
 \hline
FW+(5) & 573.6 & 21.0 & 21.0 & 39.8 & 605.6 & 51.0 & 268.2 & 63.6 & 640.6 & 191.0 & 421.8 & 76.0 \\
FW+(6) & 573.6 & 14.2 & 14.0 & 3.4 & 576.6 & 15.0 & 15.2 & 3.2 & 561.4 & 16.0 & 17.2 & 3.2 \\
PG+(5) & 555.8 & 20.6 & 21.0 & 39.0 & 583.2 & 49.4 & 253.2 & 67.4 & 621.8 & 99.2 & 387.6 & 76.6 \\
PG+(6) & 553.2 & 14.6 & 14.6 & 3.4 & 554.0 & 15.8 & 72.2 & 3.2 & 548.8 & 16.4 & 17.0 & 3.2 \\
C & 510.4 & 106.6 & 500.2 & 1558.4 & 530.0 & 110.4 & 529.4 & 1571.8 & 545.8 & 113.2 & 558.2 & 1640.8 \\
\hline
  \end{tabular}
}
\end{table}

\end{document}